\newtheorem{lem}{Lemma}[section]
\newtheorem{tw}[lem]{Theorem}
\newtheorem*{mtw}{Main Theorem}
\newtheorem{tw2}{Theorem}
\newtheorem{cor}{Corollary}
\newtheorem{prop}[lem]{Proposition}
\theoremstyle{definition}
\newcommand {\xz}{X^{(0)}}
\newcommand {\xj}{X^{(1)}}
\newcommand {\dw}{d_{\mathcal W}}
\newcommand {\ag}{A(\gamma)}
\newcommand{\me}{\medskip}
\theoremstyle{remark}
\newtheorem*{examples}{Examples}
\begin{document}

\title[Small cancellation groups have the Haagerup property]{Infinitely presented small cancellation groups have the~Haagerup property}

\author{Goulnara Arzhantseva}
\address{Universit\"at Wien, Fakult\"at f\"ur Mathematik\\
Oskar-Morgenstern-Platz 1, 1090 Wien, Austria.
}
\address{Erwin Schr\"odinger International Institute for Mathematical Physics\\
Boltzmanngasse~9, 1090 Wien, Austria.
}
\email{goulnara.arzhantseva@univie.ac.at}

\author{Damian Osajda}
\address{Instytut Matematyczny,
Uniwersytet Wroc\l awski (\textup{on leave})\\
pl.\ Grunwaldzki 2/4,
50--384 Wroc{\l}aw, Poland}
\address{Universit\"at Wien, Fakult\"at f\"ur Mathematik\\
Oskar-Morgenstern-Platz 1, 1090 Wien, Austria.
}
\email{dosaj@math.uni.wroc.pl}
\subjclass[2010]{{20F06, 20F67, 46B85, 46L80}} \keywords{Small cancellation theory, Haagerup property, Gromov's a-T-menability}

\thanks{
G.A.\ was partially supported by the ERC grant ANALYTIC no.\ 259527, and by the Swiss NSF, under Sinergia grant CRSI22-130435.
D.O.\ was partially supported by MNiSW grant N201 541738, and by Narodowe Centrum Nauki, decision no.\ DEC-2012/06/A/ST1/00259.}

\dedicatory{In memory of Kamil Duszenko}

\begin{abstract}
We prove the Haagerup property (= Gromov's a-T-mena\-bi\-li\-ty) for finitely generated groups defined by infinite presentations satisfying the $C'(1/6)$--small cancellation condition.
We deduce that these groups are coarsely embeddable into a Hilbert space and that the strong Baum-Connes conjecture holds for them. The result is a first non-trivial advancement in understanding groups with such properties among infinitely presented non-amenable direct limits of hyperbolic groups.
The proof uses the structure of a space with walls introduced by Wise.
As the main step we show that $C'(1/6)$--complexes satisfy the linear separation property.
\end{abstract}

\maketitle

\section{Introduction}
\label{s:intro}

A second countable, locally compact group $G$ has the \emph{Haagerup property}  (or $G$ is \emph{a-T-menable}  in the sense of Gromov) if it possesses
a proper continuous affine isometric action on a Hilbert space. The concept first appeared in the seminal paper of Haagerup \cite{Haa}, where
this  property was proved for finitely generated free groups. Regarded as a weakening of von Neumann's amenability and a strong negation of Kazhdan's property (T), the Haagerup property has been  revealed independently in harmonic analysis, non-commutative geometry, and ergodic theory \cite{AW,Cho,BoJaS,BR}, \cite[4.5.C]{Gro88}, \cite[7.A and 7.E]{Gro93}.
A major breakthrough was a spectacular result of Higson and Kasparov \cite{HK} establishing the strong Baum-Connes conjecture  (and, hence, the Baum-Connes conjecture with coefficients)
for groups with the Haagerup property. It follows that the Novikov higher signature conjecture and, for discrete torsion-free groups, the Kadison-Kaplansky idempotents conjecture hold for
these groups. Nowadays, many groups have been shown to have the Haagerup property and  several significant applications in K-theory and topology have been discovered \cite{ChCJJV,MislinValette}, making groups with the Haagerup property increasingly fundamental to study.

Finitely presented groups defined by a presentation with the classical small cancellation condition $C'(\lambda)$ for $\lambda \leqslant 1/6$ (see \cite{LS} for the definition)
satisfy the Haagerup property by a  result of Wise \cite{W-sc}. 

 The appearance  of \emph{infinitely presented} small cancellation groups can be traced back to nu\-merous embedding results (the idea is attributed to Britton~\cite[p.172]{MillerSchupp}):
the small cancellation condition over free products was systematically used to get an embedding of a countable group into a finitely generated group with required properties \cite[Ch.V]{LS}.
A more recent example is the Thomas-Velic\-kovic construction of a group with two distinct asymptotic cones \cite{TV}.
However, the general theory of infinitely presented small cancellation groups is much less developed than the one for the finitely presented counterpart
(see e.g.\ \cites{LS,W-sc,W-qch} for results and further references). This is related to the fact that such infinitely presented groups form a kind of a borderline for many geometric or analytic properties.
For instance, Gromov's monster groups\footnote{These are finitely generated groups which contain an expander family of graphs in their Cayley graphs.} \cite{Gro,AD}
do not satisfy the Baum-Connes conjecture with coefficients~\cite{HLS} while they are direct limits of finitely presented graphical small cancellation groups. The latter groups may not have the Haagerup property (see e.g.\ \cite[Proposition 3.1]{OllivierWise}), but they are Gromov hyperbolic.
Therefore, they satisfy the Baum-Connes conjecture with coefficients by a recent deep work of Lafforgue \cite{Lafforgue:hyp}.
Also, Gromov's monster groups admit no coarse embeddings into a Hilbert space and, hence, are not coarsely amenable \cite{Gro, AD}. Again, Gromov hyperbolic groups are known to possess both properties \cite{Yu}.

Even for the simplest case of classical small cancellation  infinitely presented groups (as considered in \cite{TV}) the questions about various Baum-Connes conjectures (see \cite{Valette:intro} for diverse variants of the conjecture) and the coarse embeddability into a Hilbert space have remained open. 

The coarse embeddability into a Hilbert space is implied by the finiteness of the asymptotic dimension~\cite{Yu}. Although
we do expect that classical small cancellation infinitely presented groups have finite asymptotic dimension\footnote{See \cite[Problem 3.16]{Dra}, \cite[Problem 6.1]{Osin}, \cite[Question 4.2]{Dusz} for specifications of the question.},  this cannot be used to obtain deepest possible analytic results such as the strong Baum-Connes conjecture~\cite{MN} (which is strictly stronger than the Baum-Connes conjecture with coefficients). Indeed, a discrete subgroup of finite covolume in $Sp(n, 1)$ is a group with finite asymptotic dimension
which does not satisfy the strong Baum-Connes conjecture~\cite{Sk}.

In this paper, we answer the questions concerning Baum-Connes conjectures and the coarse embeddability into a Hilbert space by proving the following stronger result.

\me

\begin{mtw}
\label{t:main}
  Finitely generated groups defined by infinite $C'(1/6)$--small cancellation presentations have the Haagerup property.
\end{mtw}
As an immediate consequence we obtain the following.
\begin{cor}
  Finitely generated groups defined by infinite $C'(1/6)$--small cancellation presentations are coarsely embeddable into a Hilbert space.
\end{cor}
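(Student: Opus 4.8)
The plan is to deduce the Corollary directly from the Main Theorem by invoking the standard implication that a finitely generated group with the Haagerup property coarsely embeds into a Hilbert space. Let $G$ be a group as in the statement, equipped with a fixed finite generating set $S$ and the associated word metric $d_S$. By the Main Theorem, $G$ admits a proper continuous affine isometric action $\alpha$ on some Hilbert space $\mathcal H$; write $\alpha(g)\xi = \pi(g)\xi + b(g)$, where $\pi\colon G \to O(\mathcal H)$ is an orthogonal representation and $b\colon G \to \mathcal H$ the associated $1$-cocycle. I would then show that the orbit map $f\colon G \to \mathcal H$, $f(g) := b(g) = \alpha(g)0$, is a coarse embedding.

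First I would record the upper bound. The cocycle identity $b(gh) = b(g) + \pi(g)b(h)$ combined with $\|\pi(g)v\| = \|v\|$ yields $\|b(gh)\| \le \|b(g)\| + \|b(h)\|$; iterating along a geodesic spelling of $g$ in $S$ gives $\|f(g)\| \le C\,|g|_S$ with $C := \max_{s \in S \cup S^{-1}}\|b(s)\|$. Since each $\alpha(h)$ is an isometry of $\mathcal H$, one has $\|f(g) - f(h)\| = \|\alpha(h^{-1}g)0 - 0\| = \|b(h^{-1}g)\| \le C\,d_S(g,h)$, so $f$ is Lipschitz and we may take the (even linear) control function $\rho_+(t) = Ct$.

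Next, the lower bound. Because $G$ is discrete and $\alpha$ is (metrically) proper, for every $R > 0$ the set $\{g \in G : \|b(g)\| \le R\}$ is finite, being contained in $\{g : \alpha(g)B \cap B \ne \emptyset\}$ for the closed ball $B$ of radius $R$ about $0$. Equivalently, $\|b(g)\| \to \infty$ as $|g|_S \to \infty$, so $\rho_-(t) := \inf\{\|b(g)\| : |g|_S \ge t\}$ is a well-defined nondecreasing function with $\rho_-(t) \to \infty$. Left-invariance of $d_S$ together with $\|f(g) - f(h)\| = \|b(h^{-1}g)\|$ then gives $\|f(g) - f(h)\| \ge \rho_-(d_S(g,h))$. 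Hence $f$ satisfies $\rho_-(d_S(g,h)) \le \|f(g) - f(h)\| \le \rho_+(d_S(g,h))$, i.e.\ it is a coarse embedding of $(G,d_S)$ into $\mathcal H$.

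I do not expect any real obstacle here: the entire mathematical content is carried by the Main Theorem, and the Corollary is precisely the well-known fact (see e.g.\ \cite{ChCJJV}) that the Haagerup property implies coarse embeddability into a Hilbert space. The only mild point to keep in mind is that the orbit-map argument uses discreteness to pass from metric properness of $\alpha$ to finiteness of the sublevel sets of the cocycle, which is automatic for finitely generated groups with the word metric.
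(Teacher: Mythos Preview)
Your proposal is correct and follows exactly the route the paper intends: the paper states this corollary as ``an immediate consequence'' of the Main Theorem, relying on the well-known implication Haagerup $\Rightarrow$ coarse embeddability into Hilbert space, and you have simply spelled out that implication via the orbit map of the proper affine isometric action. There is no difference in approach to discuss.
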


Moreover, using results of \cite{HiKa}, we have:
\begin{cor}
  The strong Baum-Connes conjecture holds for finitely generated groups defined by infinite $C'(1/6)$--small cancellation presentations.
\end{cor}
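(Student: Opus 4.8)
The plan is to deduce the corollary immediately from the Main Theorem together with the work of Higson and Kasparov. Let $G$ be a finitely generated group defined by an infinite $C'(1/6)$--small cancellation presentation. First I would apply the Main Theorem to conclude that $G$ has the Haagerup property, i.e.\ that $G$ admits a proper continuous affine isometric action on a Hilbert space. Since $G$ is finitely generated, it is a countable discrete group, hence second countable and locally compact, so it belongs precisely to the class of groups for which the results of \cite{HiKa} are formulated.

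Next I would invoke the main theorem of Higson and Kasparov \cite{HiKa}: for a second countable, locally compact group admitting a metrically proper affine isometric action on a Hilbert space, the Dirac and dual-Dirac morphisms are mutually inverse, so that the Kasparov $\gamma$--element equals $1$. This is exactly the assertion that the strong Baum-Connes conjecture (in the sense of \cite{MN}) holds for $G$; in particular $G$ then satisfies the Baum-Connes conjecture with coefficients, and hence the Baum-Connes conjecture itself. Combining this with the previous step yields the corollary.

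I do not expect any genuine obstacle in this deduction: the only point to verify is that the hypotheses of \cite{HiKa} are met, which is immediate from the Main Theorem, and all the substantive work is carried by the Main Theorem. For completeness I would also record the remark that, in combination with the first corollary (coarse embeddability of $G$ into a Hilbert space), one obtains the coarse Baum-Connes conjecture for $G$ via \cite{Yu} as well, so that this single short argument recovers all the analytic consequences advertised in the introduction.
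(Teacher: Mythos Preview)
Your proposal is correct and follows essentially the same approach as the paper: the paper derives the corollary directly from the Main Theorem by invoking the results of Higson--Kasparov \cite{HiKa}, exactly as you do. Your write-up simply makes explicit the intermediate verification (countable discrete, hence second countable locally compact) and the role of the $\gamma$--element, which the paper leaves implicit in the single phrase ``using results of \cite{HiKa}''.
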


\medskip

Our approach to proving Main Theorem is to show a stronger result:
A group acting properly on a simply connected $C'(1/6)$--complex, acts properly on a space with walls. 
The concept of a space with walls was introduced by Haglund-Paulin \cite{HP} (cf.\ Section~\ref{s:walls}).
It is an observation by
Bo\. zejko-Januszkie\-wicz-Spatzier \cite{BoJaS} (implicitly, without the notion of  a ``space with walls" yet) and later, independently, by Haglund-Paulin-Valette (unpublished --- compare \cite[Introduction]{ChMV}), that a finitely generated group admitting a proper action on a space with walls has the Haagerup property.
We define walls on the $0$--skeleton of the corresponding $C'(1/6)$--complex, using the construction of Wise \cite{W-sc} (cf.\ Section~\ref{s:walls}). The main difficulty is to show the properness. To do this we prove the following general result about complexes (see Theorem~\ref{p:lsp} for the precise statement). Recall that the \emph{linear separation property} says that the wall pseudo-metric and the path metric, both considered on the $0$--skeleton of the complex, are bi-Lipschitz 
equivalent.
\begin{tw2}
\label{t:lsp}
  Simply connected $C'(1/6)$--complexes satisfy the linear separation property.
\end{tw2}

This result is of independent interest. Note that for complexes satisfying the $B(6)$--condition, introduced and extensively explored by Wise \cites{W-sc,W-qch}, the linear separation property does not hold in general. Moreover,  such complexes might not admit any proper separation property --- see Section~\ref{s:final}  
(where we also explain that our results are not immediate consequences of Wise's work).

From Theorem~\ref{t:lsp} it follows that groups acting properly on simply connected $C'(1/6)$--complexes act properly on spaces with walls
 (Theorem~\ref{t:haag}). This implies immediately Main Theorem. This also extends a result of Wise \cite[Theorem 14.2]{W-sc} on non-satis\-fiability of Kazhdan's property (T) for such infinite groups --- see Corollary~\ref{c:noT} in Section~\ref{s:Haa}.
In addition, the linear separation property yields results on the affine isometric group actions on
$L^p$ spaces --- see Corollary~\ref{c:orbit}.
\medskip

Our main result holds as well for certain groups with more general graphical small cancellation presentations. Note however that some graphical small cancellation groups 
satisfy Kazhdan's property (T), and thus do not have the Haagerup property --- cf.\ e.g.\ \cite{Gro,OllivierWise}.  Therefore, besides providing new results, the current paper plays also the role of an initial step in a wider program for
distinguishing groups with the Haagerup property among infinitely presented non-amenable direct limits of hyperbolic groups.
\medskip

\noindent
{\bf Acknowledgment.} We thank Dominik Gruber, Yves Cornulier  and Alain Valette for valuable remarks improving the manuscript.
We thank the anonymous referee for useful comments and for pointing out to us Corollary~\ref{c:orbit}.

\section{Preliminaries}
\label{s:prelim}

A standard reference for the \emph{classical small cancellation} theory considered in this paper is the book \cite{LS}.
In what follows however we will mostly deal with an equivalent approach, focusing on CW complexes, following the notations from \cites{W-sc,W-qch}.
\medskip

All complexes in this paper are simply connected \emph{combinatorial} $2$--dimensional CW complexes, i.e.\ restrictions of attaching maps to open edges are homeomorphisms onto open cells.
We assume that if in such a complex $X$ two cells are attached along a common boundary then they are equal, i.e.\ e.g.\ that $2$--cells are determined uniquely by images of their attaching maps. 
Thus, we do not distinguish usually between a $2$--cell and its boundary, being a cycle --- we denote both by $r$ and call them \emph{relators}.
Note that an attaching map $r\to X$ need not to be injective.  However, the injectivity indeed holds in the case we consider below.
Moreover, we assume that relators have even length. This is not a major restriction since one can always pass to a complex whose edges are subdivided in two.
Throughout the article, if not specified otherwise, we consider the \emph{path metric}, denoted $d(\cdot,\cdot)$, defined on the $0$--skeleton $X^{(0)}$ of $X$ by (combinatorial) paths in $X^{(1)}$. \emph{Geodesics} are the shortest paths in $X^{(1)}$ for this metric.
By a (generalized) \emph{path} we mean a cellular map $p\to X$, from a subdivision of an interval to $X$.

\medskip
A path $p \to X$ is a \emph{piece} if there are $2$--cells $r,r'$ such that $p\to X$ factors as $p \to r \to X$ and as $p\to r' \to X$, but there is no isomorphism $r \to r'$ that makes the following diagram commutative.
   $$  \begindc{\commdiag}[2]
\obj(12,1)[a]{$r$}
\obj(35,1)[b]{$X$}
\obj(35,17)[c]{$r'$}
\obj(12,16)[d]{$p$}
\obj(12,17)[d']{}
\mor{a}{b}{}
\mor{c}{b}{}
\mor{a}{c}{}
\mor{d'}{c}{}
\mor{d'}{a}{}
\enddc  $$

\noindent
 This means that $p$ occurs in $r$ and $r'$ in two essentially distinct ways.

Let $\lambda \in (0,1)$.
We say that the complex $X$ satisfies the \emph{$C'(\lambda)$--small cancellation condition} (or, shortly, the \emph{$C'(\lambda)$--condition}; or we say that $X$ is a \emph{$C'(\lambda)$--complex}) if every piece $p\to X$ factorizing through $p\to r \to X$ has \emph{length} $|p|<\lambda |r|$ (where $|s|$ is the number of edges in the path $s$). We say that $X$ satisfies the \emph{$B(6)$--condition} if every path factorizing through $r$ and being a concatenation of at most $3$ pieces has length at most $|r|/2$. Note (cf.\ \cite[Section 2.1]{W-sc}) that the $C'(1/6)$--condition implies the $B(6)$--condition. Under the $B(6)$--condition, the attaching maps $r\to X$ for relators are injective --- see e.g.\ \cite[Corollary 2.9]{W-sc}. Thanks to this, we may view relators as embedded cycles in the $1$--skeleton $X^{(1)}$ of $X$.
\medskip

In this paper, we work with a group $G$ defined by an infinite presentation
\begin{align}
 \label{e:p1}
  G=\langle S \; | \; r_1,r_2,r_3,\ldots \rangle,
\end{align}
with a finite symmetric generating set $S$ and (freely) cyclically reduced \emph{relators} $r_i$. There is a combinatorial $2$--dimensional CW complex, the \emph{Cayley complex} $X$, associated 
with the presentation (\ref{e:p1}). It is
defined as follows.
The $1$--skeleton $X^{(1)}$ of
$X$ is the Cayley graph (with respect to the generating set $S$) of $G$.
The $2$--cells of $X$ have boundary cycles labeled by relators $r_i$ and are attached to the $1$--skeleton by maps preserving labeling (of the Cayley graph).
We say that the presentation (\ref{e:p1}) is a \emph{$C'(\lambda)$--small cancellation presentation} when the corresponding Cayley complex $X$ satisfies the $C'(\lambda)$--condition.

\begin{examples} Here are a few concrete examples of infinite small cancellation presentations defining groups with various
unusual properties.

\begin{itemize}
\item[(i)](Pride)  For each positive integer $n,$ let $u_n, v_n$ be words in $a^n$ and $b^n,$ and let
$$
G=\langle a,b \mid au_1, bv_1, au_2, bv_2, au_3, bv_3, \ldots \rangle.
$$ 
An appropriate choice of $u_n, v_n$  gives an infinite $C'(1/6)$--small cancellation presentation of a non-trivial group $G.$
For instance, one can take
$u_n=(a^nb^n)^{10}, v_n=(a^nb^{2n})^{10}$ for $n\geqslant 1.$
By construction, $G$ has no proper subgroups of finite index. Indeed, such a subgroup has to contain a normal closure of $a^n$ and $b^n$ for some $n$, which coincides with $G$
due to the chosen relators.  In particular, $G$ is not residually finite~\cite{Pride}. 
Every finite $C'(1/6)$--small cancellation presentation defines a residually finite group~\cite{W-qch}. 
Whether or not there exists a non residually finite Gromov hyperbolic group is a major open question in geometric group theory.\\[-2pt]

\item[(ii)](Thomas--Velic\-kovic) The following infinite presentations satisfy \linebreak the $C'(1/6)$--small cancellation condition:
$$
G_{I,k}=\langle a,b \mid (a^nb^n)^k, n\in I \rangle,   
$$
where $I\subseteq \mathbb N$ is a given infinite subset and $k\geqslant 7$ is a fixed integer.
These are first examples of finitely generated groups with two distinct asymptotic cones~\cite{TV} (which arise with respect to two appropriately chosen, depending on $I,$ distinct 
ultrafilters on $\mathbb N$). We refer the reader to \cite{DrutuSapir,ErschlerOsin} for more results in this direction using infinitely presented small cancellation groups in a crucial way.\\[-2pt]
 
\item[(iii)](Rips) Given a finitely generated group $Q=\langle a_1,\ldots, a_m \mid r_1, r_2, r_3, \ldots \rangle$ there is
a $C'(1/6)$-small cancellation group $G$ and a 2-generated subgroup $N$ of $G$ so that $G/N\cong Q.$ Indeed,
let $G$ be given by generators $a_1, \ldots, a_m, x, y$ and relators
$$
A_j=(xy)^{80j+1}xy^2(xy)^{80j+2}xy^2\ldots (xy)^{80(j+1)}xy^2,  j=1,2,\ldots, \\
$$
where $A_1, A_2, \ldots$ is the sequence of words: $$a_i^{\pm 1}xa_i^{\mp 1} (i=1,\ldots m), a_i^{\pm 1}ya_i^{\mp 1} (i=1,\ldots m), r_1, r_2, r_3,  \ldots .$$
This presentation of $G$ satisfies the $C'(1/6)$--small cancellation condition, the required $N$ is the subgroup of $G$ generated by $x, y,$ and $G$ is finitely presented if and only if $Q$ is.
A specific choice of $Q$ produces (finitely presented) small cancellation groups with exotic algebraic and algorithmic properties of certain subgroups~\cite{Rips}, see also~\cite{BaumslagMillerShort}, and~\cite{W-rf,BelegradekOsin,OllivierWise} for variants of Rips construction (which provide $G$ and/or $N$ with additional properties). 
\end{itemize}
\end{examples}

Observe that a given infinite small cancellation presentation $G=\langle a,b \; | \; r_1,r_2,r_3,\ldots \rangle$
yields many distinct  small cancellation groups:
For $I\subseteq \mathbb N$ define $G_I=\langle a,b \; | \; r_i, i \in I \rangle.$ Then the family
$
\left\{ G_I\right\}_{I\subseteq \mathbb N} 
$
contains continuum many non-isomorphic small cancellation groups (use the small cancellation condition to show that there is no
group isomorphism mapping $a\mapsto a, b\mapsto b$ and the cardinality argument to conclude as there are at most countably many other possible generators). 

\subsection{Local-to-global density principle}
\label{s:prel}

Here we provide a simple trick that will allow us to deal with different sizes of relators in Section~\ref{s:properness}.

Let $\gamma$ be a simple path in $X^{(1)}$.
For a subcomplex $B$ of $\gamma$, by $E(B)$ we denote the set of edges of $B$.
Let $\mathcal U$ be a family of nontrivial subpaths of $\gamma$, and let $A$ be a subcomplex of $\bigcup \mathcal U$ (that is, of the union $\bigcup_{U\in \mathcal U}U$).

\begin{lem}[Local-to-global density principle]
\label{l:lgd}
Assume that there exists $C\geqslant 0$, such that
\begin{align*}
  \frac{|E(A)\cap E(U)|}{|E(U)|}\geqslant C,
\end{align*}
for every $U\in \mathcal U$.
Then $|E(A)|\geqslant (C/2)|E(\bigcup \mathcal U)|$.
\end{lem}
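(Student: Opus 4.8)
The plan is to reduce the statement to a double-counting estimate, once $\mathcal U$ has been thinned out to a subfamily that covers $\bigcup\mathcal U$ with bounded multiplicity. Since $\gamma$ is a simple path, its edges carry a natural linear order, and the subcomplex $\bigcup\mathcal U$ of $\gamma$ is a disjoint union of arcs; each $U\in\mathcal U$ is connected, so $E(U)$ is an interval of consecutive edges contained in one such arc. It therefore suffices to prove the inequality inside a single connected component of $\bigcup\mathcal U$ and then sum over the (finitely many) components --- the hypothesis passes to each component because every $E(U)$ lies in one of them. So I would fix a component, identify its edge set with $\{1,\dots,N\}$, and view each relevant $U$ as an integer subinterval $[a_U,b_U]$.

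The core of the argument is a greedy, Vitali-type selection. Set $c_0=0$; having chosen $U_1,\dots,U_{i-1}$ whose union contains the edges $1,\dots,c_{i-1}$, pick among all $U\in\mathcal U$ with $a_U\leqslant c_{i-1}+1\leqslant b_U$ one, say $U_i$, for which $b_{U_i}=:c_i$ is maximal; such a $U$ exists because the edge $c_{i-1}+1$ belongs to $\bigcup\mathcal U$. Since $c_i\geqslant c_{i-1}+1$, the process stops with $c_m=N$ for some $m$, and $U_1,\dots,U_m$ cover every edge of the component. The crucial observation is that $U_i\cap U_k=\varnothing$ whenever $k\geqslant i+2$: if some edge $e$ were in both, then, using $e\leqslant c_i\leqslant c_{k-1}$ together with the fact that the interval $U_k$ contains $e$ and the edge $c_{k-1}+1$, the interval $U_k$ would also contain the edge $c_i+1$; but then $U_k$ would have been one of the candidates at step $i+1$, forcing $c_{i+1}\geqslant c_k$, contradicting $c_{i+1}<c_k$. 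Consequently every edge --- in particular every edge of $A$, since $A\subseteq\bigcup\mathcal U$ --- lies in at most two of the $U_i$.

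It remains to assemble the estimate. Covering gives $|E(\bigcup\mathcal U)|\leqslant\sum_{i=1}^m|E(U_i)|$ on the component; the hypothesis gives $|E(U_i)|\leqslant C^{-1}|E(A)\cap E(U_i)|$; and multiplicity at most two gives $\sum_{i=1}^m|E(A)\cap E(U_i)|\leqslant 2|E(A)|$. Combining these,
\[
  |E(\textstyle\bigcup\mathcal U)|\ \leqslant\ \frac1C\sum_{i=1}^m |E(A)\cap E(U_i)|\ \leqslant\ \frac2C\,|E(A)|,
\]
that is, $|E(A)|\geqslant (C/2)\,|E(\bigcup\mathcal U)|$ on each component (the case $C=0$ being trivial); summing over the components finishes the proof.

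I expect the only genuine obstacle to be the disjointness claim $U_i\cap U_k=\varnothing$ for $k\geqslant i+2$ --- i.e.\ pinning the multiplicity down to exactly $2$ rather than merely bounded; the reduction to a single arc and the final counting are bookkeeping. One mild technical point: if $\gamma$ (hence $\bigcup\mathcal U$) were infinite, one would first run the argument on finite subfamilies and pass to the supremum, but in the intended applications $\gamma$ is a finite path, so this does not occur.
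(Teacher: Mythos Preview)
Your argument is correct and is essentially the same idea as the paper's proof: both exploit that a family of subintervals of a path admits a subcover with multiplicity at most~$2$. The packaging differs slightly. The paper takes a minimal subcover $\mathcal U'\subseteq\mathcal U$, asserts (without proof) that it splits as $\mathcal U'=\mathcal U_1'\cup\mathcal U_2'$ with each $\mathcal U_i'$ pairwise disjoint, picks the $\mathcal U_i'$ covering at least half of the edges, and bounds $|E(A)|$ from below via that disjoint family. You instead build the multiplicity-$2$ subcover explicitly by the greedy sweep and then double-count $\sum_i |E(A)\cap E(U_i)|$. Your version has the virtue of being self-contained (you actually prove the disjointness claim $U_i\cap U_k=\varnothing$ for $k\geqslant i+2$, which is precisely the $2$-colourability the paper leaves implicit); the paper's version is a line shorter once one grants that fact. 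No gap.
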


\begin{proof}
  Let $\mathcal U'\subseteq \mathcal U$ be a minimal cover of $\bigcup \mathcal U$. Then there are two subfamilies $\mathcal U_1', \mathcal U_2'$ of $\mathcal U'$
  with the following properties:
  \begin{enumerate}
    \item $\mathcal U_i'$ consists of pairwise disjoint paths, $i=1,2$;
    \item $\mathcal U_1' \cup \mathcal U_2'=\mathcal U'$.
  \end{enumerate}
  Without loss of generality we may assume that $|E(\bigcup \mathcal U_1')|\geqslant |E(\bigcup \mathcal U')|/2$.
  Then
  \begin{align*}
  |E(A)|&\geqslant |E(A)\cap E(\bigcup \mathcal U_1')|=\sum_{U\in \mathcal U_1'} |E(A)\cap E(U)| \geqslant\\
   &\geqslant \sum_{U\in \mathcal U_1'} C|E(U)|
  = C|E(\bigcup \mathcal U_1')|\geqslant C|E(\bigcup \mathcal U')|/2=\frac{C}{2}|E(\bigcup \mathcal U)|.
  \end{align*}
\end{proof}

\section{Walls}
\label{s:walls}

Let $X$ be a complex satisfying the $B(6)$--condition.
In this section, we equip the $0$--skeleton $X^{(0)}$ of $X$ with the structure of a space with walls $(X^{(0)}, \mathcal{W})$. We use the walls defined by Wise  \cite{W-sc}.
\medskip

\noindent
{\bf Remark.} Many considerations in \cite{W-sc} concern finitely presented groups. Nevertheless, all the results about walls stated below are provided there under no further assumptions on the complex $X$. \medskip

Let $\mathcal W$ be a family of partitions (called \emph{walls}) of a set $Y$ into two classes.
The pair $(Y,\mathcal W)$ is called a \emph{space with walls} (cf.\ e.g.\ \cite{ChMV}) if the following holds.
For every two distinct points $x,y\in Y$, the number of walls separating $x$ from $y$ (called the \emph{wall pseudo-metric}), denoted by $\dw(x,y)$, is finite.
\medskip

Now we define walls for $\xz$. For a tentative abuse of notation we denote by ``walls" some sets of edges of $\xj$. Then we show that they indeed define walls.
Following Wise \cite{W-sc}, we say that two edges are related if they are opposite in some $2$--cell. The equivalence class of the transitive closure of such relation is called a \emph{wall}.

\begin{lem}[{\cite[Lemma 3.13]{W-sc}}]
  \label{l:wallsep}
  Removing all open edges from a given wall disconnects $\xj$ into exactly two components.
\end{lem}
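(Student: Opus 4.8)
The statement is the conjunction of two facts: deleting the open edges of the wall $W$ from $\xj$ leaves at least two components, and at most two. Write $Z$ for the graph with vertex set $\xz$ whose edges are those edges of $\xj$ not lying in $W$. For the lower bound I would construct a $\mathbb{Z}/2$--valued ``side function''. Let $c$ be the $\mathbb{Z}/2$--valued function on the edge set of $\xj$ equal to $1$ on the edges of $W$ and $0$ on the others. Since $W$ is an equivalence class of the relation ``opposite in a $2$--cell'', along the boundary cycle of any relator $r$ the function $c$ takes equal values on each edge and on its opposite, so pairing up opposites shows that $c$ sums to $0$ over that boundary cycle. As $X$ is simply connected, every $1$--cycle of $\xj$ is a $\mathbb{Z}/2$--sum of boundary cycles of relators, hence $c$ sums to $0$ over every $1$--cycle, and therefore summing $c$ along paths issued from a fixed base vertex defines a function $\phi\colon\xz\to\mathbb{Z}/2$ with $\phi(u)+\phi(v)=c(\{u,v\})$ for every edge $\{u,v\}$. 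This $\phi$ is constant on each component of $Z$ and distinguishes the two endpoints of every edge of $W$, so $Z$ has at least two components.

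For the upper bound I would bring in the hypergraph of $W$ (Wise): the graph $\Gamma_W$ with one vertex per edge of $W$ --- its midpoint --- and, for each relator $r$ and each opposite pair $\{e,e'\}$ of edges of $r$ contained in $W$, one segment through the interior of $r$ joining the midpoints of $e$ and $e'$. By the definition of $W$, $\Gamma_W$ is connected. The crucial point --- and the only place where small cancellation is genuinely used --- is that the $B(6)$--condition forces $\Gamma_W$ to be a tree, embedded in $X$: a reduced loop in $\Gamma_W$ would produce a reduced disc diagram whose boundary is covered by a bounded number of pieces, contradicting the length bound built into the $B(6)$--condition (a Greendlinger-type argument). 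In particular, each relator carries at most one segment of $\Gamma_W$, i.e.\ meets $W$ in at most one opposite pair; consequently, if a relator $r$ meets $W$ in the opposite pair $\{e,e'\}$, then the boundary cycle of $r$ with $e$ and $e'$ removed is a disjoint union of two arcs, both missing $W$, and --- since $\phi$ is constant along any path avoiding $W$ --- one of them joins the two endpoints on which $\phi=0$ and the other joins the two endpoints on which $\phi=1$.

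It remains to check that each level set of $\phi$ lies in a single component of $Z$; together with the first paragraph this gives exactly two components. So fix $u,v$ with $\phi(u)=\phi(v)$ and take a path $\gamma$ from $u$ to $v$ in $\xj$ using as few edges of $W$ as possible; this number is even, and if it were positive I would single out two edges $f_1,f_2$ of $W$ that $\gamma$ crosses consecutively, with the subpath of $\gamma$ between them avoiding $W$. Since $f_1$ and $f_2$ lie in the same wall, there is a chain of relators $r_1,\dots,r_m$ and edges $f_1=g_0,g_1,\dots,g_m=f_2$ of $W$ with $g_{j-1}$ opposite $g_j$ in $r_j$ --- a path in $\Gamma_W$ from the midpoint of $f_1$ to that of $f_2$. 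Each $r_j$ has two boundary arcs between $g_{j-1}$ and $g_j$, both avoiding $W$, and they are ``parallel'' with respect to $\phi$; splicing, for every $j$, the one lying on a consistently chosen side of $\phi$ yields a $W$--avoiding path from an endpoint of $f_1$ to an endpoint of $f_2$, and a parity count with $\phi$ identifies these as exactly the two endpoints along which $\gamma$ may be rerouted so as to delete the crossings at $f_1$ and $f_2$ and introduce no new one --- contradicting minimality. Hence the minimum is $0$, so $u$ and $v$ lie in the same component of $Z$.

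The one genuinely difficult ingredient is the tree-ness (and embeddedness) of the hypergraph $\Gamma_W$ under the $B(6)$--condition --- equivalently, the fact that a wall cannot close up inside $X$ --- which really rests on the reduced-diagram (Greendlinger) machinery of small cancellation theory. Everything else reduces to the mod~$2$ cohomology of a simply connected complex together with an elementary surgery along $\Gamma_W$.
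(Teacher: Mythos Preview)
The paper offers no proof of this lemma: it is quoted from Wise \cite[Lemma~3.13]{W-sc}, and the surrounding text simply imports the statement. So there is nothing in the paper to compare your argument against beyond the citation itself.

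Your sketch is essentially correct and is close to how Wise himself proceeds. The mod~$2$ cocycle argument for ``at least two components'' is exactly right. For ``at most two'', your surgery along a hypergraph path is also the standard move. The one point that deserves a flag is your claim that each relator meets $W$ in at most one opposite pair. This does \emph{not} follow from $\Gamma_W$ being a tree as an abstract graph: two disjoint edges $[e_1,e_1']$ and $[e_2,e_2']$ coming from the same $2$--cell create no cycle in $\Gamma_W$ by themselves. What you actually need --- and what you do invoke --- is that the natural map $\Gamma_W\to X$ is an \emph{embedding}; then two distinct diametral segments through a single $2$--cell would cross in its interior, violating injectivity. Wise does establish this embedding (it is part of the same package of results as the tree statement), so your appeal is legitimate; but note that the present paper only records the abstract tree property (Lemma~\ref{l:hypergraph}), so in a self-contained write-up you would want to cite the embedding separately. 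With that caveat, your outline is sound, and you have correctly located the only genuinely nontrivial input --- the Greendlinger-type disc-diagram argument behind the tree/embedding property of $\Gamma_W$.
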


Thus, we define the family $\mathcal W$ for $\xz$ as the partitions of $\xz$ into sets of vertices in the components described by the lemma above.

\begin{prop}
  \label{p:sww}
  With the system of walls defined as above, $(\xz,\mathcal W)$ becomes a space with walls.
\end{prop}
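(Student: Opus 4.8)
The plan is to verify the single nontrivial axiom in the definition of a space with walls, namely that $\dw(x,y) < \infty$ for every pair of distinct vertices $x,y \in \xz$; that each element of $\mathcal W$ is genuinely a partition of $\xz$ into two classes is precisely the content of Lemma~\ref{l:wallsep}. Recall also that, by construction, the walls---viewed as sets of edges of $\xj$---are the equivalence classes of the transitive closure of the ``being opposite in a $2$--cell'' relation, so they form a partition of the edge set of $\xj$: every edge of $\xj$ lies in exactly one wall.

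First I would fix distinct $x,y \in \xz$ and choose a geodesic edge-path $\gamma$ from $x$ to $y$ in $\xj$; such a path exists since $\xj$, being the $1$--skeleton of the (simply connected, hence connected) complex $X$, is connected, and it consists of $d(x,y)$ edges. The key observation is that every wall separating $x$ from $y$ must contain an edge of $\gamma$: if a wall $W$ contained no edge of $\gamma$, then $\gamma$ would be an edge-path avoiding all open edges of $W$ and joining $x$ to $y$, so $x$ and $y$ would lie in the same component of $\xj$ with the open edges of $W$ removed, contradicting that $W$ separates them.

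Then I would conclude by a counting argument. Since distinct walls are pairwise disjoint as sets of edges, choosing for each wall $W$ that separates $x$ from $y$ an edge $e_W \in E(\gamma) \cap W$ defines an injection from the set of such walls into $E(\gamma)$. Hence the number of walls separating $x$ from $y$ is at most $|E(\gamma)| = d(x,y)$, which is finite. This proves $\dw(x,y) \leqslant d(x,y) < \infty$ and therefore that $(\xz,\mathcal W)$ is a space with walls.

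As for difficulty, there is essentially no obstacle here: the statement is a formal consequence of Lemma~\ref{l:wallsep} together with the elementary remark that a separating wall must meet every edge-path between the two points, and it uses only the $B(6)$--condition (through Lemma~\ref{l:wallsep}). The substantive work is the reverse, converse-type lower bound $d(x,y) \leqslant A\,\dw(x,y) + B$ --- the linear separation property of Theorem~\ref{t:lsp} --- which is established later and is where the $C'(1/6)$--condition is genuinely needed.
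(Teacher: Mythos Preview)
Your proof is correct and takes essentially the same approach as the paper: the paper's proof is the one-line observation that a path in $\xj$ between two vertices bounds the number of separating walls, and your argument is simply a careful unpacking of that sentence (including the explicit inequality $\dw(x,y)\leqslant d(x,y)$, which the paper states and uses later in Section~\ref{s:properness}).
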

\begin{proof}
  Since, for any two vertices, there exists a path in $\xj$ connecting them, we get that the number of walls separating those two vertices is finite.
\end{proof}
We recall two further results on walls that will be used in Section~\ref{s:properness}.
The \emph{hypercarrier} of a wall $w$ is the $1$--skeleton of the subcomplex of $X$ consisting of all closed $2$--cells containing edges in $w$ or of a single edge $e$ if $w=\{ e \}$.

\begin{tw}[{\cite[Theorem 3.18]{W-sc}}]
  \label{l:carrconv}
  Each hypercarrier is a convex subcomplex of $\xj$, that is, any geodesic connecting vertices of a hypercarrier is contained in this hypercarrier.
\end{tw}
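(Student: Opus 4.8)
The plan is to argue by contradiction with a minimal-area disc (van Kampen) diagram, exploiting the one local fact that the $C'(1/6)$--condition gives us: since $\gamma$ is a geodesic, for every relator $R$ the intersection $\gamma\cap\partial R$ is an arc of length at most $|R|/2$ (otherwise the complementary arc of $\partial R$, being strictly shorter, would yield a shorter path between the same two vertices). First I would record the \emph{ladder structure} of the hypercarrier. By definition a wall is a chain of edges $e_0,e_1,\ldots,e_m$ with $e_{i-1},e_i$ opposite in a $2$--cell $R_i$; since $|R_i|=2n_i$ is even, these two wall edges cut $\partial R_i$ into two arcs of length $n_i-1$. Thus the hypercarrier $H$ is a ladder $R_1\cup\cdots\cup R_m$ (or a single edge) whose rungs are the $e_i$ and whose two sides are boundary paths $P,Q$ joining the ends; the key numerology is that each side arc has length $n_i-1<|R_i|/2$, and, by Theorem~\ref{l:wallsep}, $P$ and $Q$ lie on opposite sides of the wall.

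Given $x,y\in H^{(0)}$ and a geodesic $\gamma$ from $x$ to $y$, I would pick a path $\sigma$ from $x$ to $y$ running along the ladder boundary of $H$, and fill the loop $\gamma\sigma^{-1}$ by a reduced disc diagram $D$ of minimal area (it exists since $X$ is simply connected). If $D$ has no $2$--cell it is a tree, and reducedness together with the fact that $\gamma$ is geodesic forces $\gamma$ to be a subpath of $\sigma\subseteq H$, so $\gamma\subseteq H$ and we are done. Hence I may assume $D$ contains a $2$--cell and invoke Greendlinger's lemma (valid under $C'(1/6)$): $D$ has at least one shell, i.e.\ a $2$--cell $R$ whose outer arc $\partial R\cap\partial D$ is an arc of length $>|R|/2$, and generically two such shells.

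The outer arc of a shell $R$ lies on $\gamma$ or on $\sigma$. If it lies on $\gamma$, then $\gamma\cap\partial R$ exceeds $|R|/2$, contradicting the local fact above; so every shell must meet the carrier path $\sigma$. If such a shell $R$ meets $\sigma$ inside a single ladder arc, then either $R$ is that carrier cell $R_i$ --- impossible, since the arc there has length $<|R_i|/2$ --- or $R\neq R_i$ and the shared arc is a piece of length $<|R|/6$, again contradicting $>|R|/2$. This disposes of all the easy configurations and confines the difficulty to a single case.

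The hard part will be the carrier-side shell whose outer arc sweeps across several consecutive ladder arcs, overlapping carrier cells $R_j,\ldots,R_{j+k}$: each overlap is a piece $(<|R|/6)$, and the $B(6)$--condition only forbids a concatenation of \emph{at most three} pieces from exceeding $|R|/2$, so one must still rule out four or more such pieces. I expect to overcome this by combining the ladder geometry with the minimality of $D$: a relator straddling a whole stretch of rungs should permit either a shortening of $\gamma$ (contradicting that it is geodesic) or a rerouting of $\sigma$ through $R$ that strictly lowers the area of $D$ (contradicting minimality), the replacement being controlled by comparing the outer arc of $R$ with its inner, $\gamma$--facing arc via the $C'(1/6)$ piece bounds and the wall-separation of Theorem~\ref{l:wallsep}. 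Making precise how a single relator may cross the rungs of the wall is the crux of the whole argument; the degenerate cases in which a shell's outer arc covers one of the corners $x$ or $y$ are then absorbed by the same shortening moves.
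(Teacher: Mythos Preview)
First, note that the paper does not give its own proof of this statement: Theorem~\ref{l:carrconv} is quoted verbatim from Wise \cite[Theorem 3.18]{W-sc} and used as a black box, so there is no argument in the paper to compare yours against.

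On the substance of your proposal, there is a genuine structural error at the very first step. You assert that ``the hypercarrier $H$ is a ladder $R_1\cup\cdots\cup R_m$'' with two boundary paths $P,Q$. This is false in general. By Lemma~\ref{l:hypergraph} the hypergraph $\Gamma_w$ is a \emph{tree}, not a path: a wall edge can lie in many $2$--cells, so the hypergraph can branch, and the hypercarrier is then a tree-like union of $2$--cells rather than a linear ladder. In that situation there are no global ``two sides $P,Q$'', and your choice of a boundary path $\sigma$ from $x$ to $y$ along the carrier is undefined. Any diagram argument has to cope with this branching (Wise does), and several of your later reductions---e.g.\ confining a shell's outer arc to a run of consecutive ladder arcs---rely on the ladder picture that is not available.

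Even if one grants a ladder-shaped carrier (say, by restricting to the convex hull in $\Gamma_w$ of the cells containing $x$ and $y$), your ``hard part'' is not a proof but a hope: you acknowledge that a shell may overlap four or more carrier cells in pieces, each $<|R|/6$, summing to more than $|R|/2$, and then only say you ``expect'' a shortening or area-reduction move. That is exactly where the work lies; neither the $C'(1/6)$ piece bound nor minimality of $D$ by itself rules this out, and you have not produced the comparison that does. The convexity theorem in \cite{W-sc} is proved in the more general $B(6)$ setting and handles precisely these configurations; if you want an independent proof, you will need to supply that missing case in full.
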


For a wall $w$, its \emph{hypergraph} $\Gamma_w$ is a graph defined as follows. Vertices of $\Gamma_w$ are edges in $w$, and edges correspond to $2$--cells containing opposite edges in $w$.

\begin{lem}[{\cite[Corollary 3.12]{W-sc}}]
\label{l:hypergraph}
Each hypergraph is a tree.
\end{lem}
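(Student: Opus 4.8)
The plan is to verify the two defining properties of a tree—connectedness and the absence of cycles—separately, the first being formal and the second being where the small cancellation hypothesis enters.

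Connectedness is immediate from the definition of a wall. By construction $w$ is an equivalence class for the transitive closure of the relation ``opposite edges in a common $2$--cell''; so if $e,e'\in w$ there is a finite chain $e=f_0,f_1,\dots,f_m=e'$ of edges of $w$ with each $f_{j-1},f_j$ opposite in some $2$--cell. By the definition of $\Gamma_w$ such a chain is precisely an edge--path in $\Gamma_w$ from $e$ to $e'$. Hence $\Gamma_w$ is connected.

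For acyclicity I would realize $\Gamma_w$ geometrically and argue by contradiction. Assign to each edge $e\in w$ its midpoint $m_e$, and to each $2$--cell $R$ carrying a pair of opposite edges $e,e'\in w$ the chord through the interior of $R$ joining $m_e$ to $m_{e'}$; the union $\widehat\Gamma_w$ of these midpoints and chords is a topological realization of $\Gamma_w$ inside $X$. Suppose $\Gamma_w$ had a cycle and take one of minimal combinatorial length; it determines edges $e_0,\dots,e_{n-1}\in w$ and $2$--cells $R_1,\dots,R_n$ with $e_{i-1},e_i$ opposite in $R_i$ (indices modulo $n$). Since distinct chords can meet only inside a common cell, and two chords of one cell cross at an interior point, a shortest cycle yields an \emph{embedded} closed curve $c$ in $X$—here it is essential that no cell contributes two chords to $w$, a degeneracy that the disk argument below also excludes. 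As $X$ is simply connected, $c$ bounds a disk, and I would take $D$ to be the closure of the enclosed region together with the inner halves of the cut cells $R_i$, reduced if necessary; this is a disk diagram whose boundary is swept out, cell by cell, by the inner boundary arcs of $R_1,\dots,R_n$.

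The contradiction then comes from the small cancellation geometry, and this is the step I expect to be the crux. Opposite edges split $\partial R_i$ into two arcs of equal length, so the chord of $R_i$ exposes on $\partial D$ exactly half of $\partial R_i$; consecutive cells $R_i,R_{i+1}$ meet only along the edge $e_i$, and in general any arc along which a cell of $D$ meets the remainder of $D$ is a concatenation of pieces. I would then run a combinatorial Gauss--Bonnet argument (equivalently, Greendlinger's lemma): interior vertices have degree at least three and interior cells are nonpositively curved, so the total curvature $2\pi$ of the disk would have to be supplied by the boundary cells, whereas the $B(6)$--condition (here guaranteed by $C'(1/6)$) bounds every union of at most three pieces of $R_i$ by $|R_i|/2$, exactly forbidding a chord--cut boundary cell from carrying the positive curvature it would need. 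The resulting total curvature is strictly less than $2\pi$, a contradiction. The delicate point—and the heart of a careful write-up—is the angle bookkeeping at the cells that $c$ cuts through: one must account for the corners created where a chord enters and leaves $R_i$, and check that reducing $D$ to a genuine reduced diagram neither creates new crossings nor destroys the cycle. Once this is settled, $\Gamma_w$ is a connected graph with no cycles, i.e.\ a tree.
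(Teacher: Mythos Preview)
The paper does not supply a proof of this lemma at all: it is quoted verbatim as \cite[Corollary 3.12]{W-sc} and used as a black box. Your sketch is therefore not competing with anything in the paper; it is an outline of Wise's original argument (geometric realization of the hypergraph by midpoints and chords, a minimal cycle bounding a collared disk diagram by simple connectedness, and a Greendlinger/Gauss--Bonnet contradiction from the $B(6)$--condition), and as such it is on the right track.

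One point to tighten: your justification that a minimal cycle in $\Gamma_w$ realizes to an \emph{embedded} curve is slightly circular as written. You say it is ``essential that no cell contributes two chords to $w$'' and that ``the disk argument below also excludes'' this, but the disk argument you describe already presupposes an embedded boundary. In Wise's treatment this is handled by first proving that hypergraph \emph{segments} embed (his Lemma preceding the cited Corollary), or equivalently by observing that if two chords of the minimal cycle cross inside a cell one obtains a strictly shorter collared diagram to which the same curvature count applies. Either way the step is genuine and should be isolated rather than deferred to the later bookkeeping.
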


\section{Linear separation property}
\label{s:properness}
\noindent
\emph{From now on, unless stated otherwise, each complex $X$ considered in this paper, satisfies the $C'(\lambda)$--condition, for some $\lambda \in (0,\frac{1}{6}]$, and its $0$--skeleton is equipped with the structure of a space with walls $(X^{(0)},\mathcal W)$ described in Section~\ref{s:walls}.}
\medskip

In this section, we show that complexes satisfying $C'(1/6)$--condition satisfy the \emph{linear separation property} (Theorem~\ref{t:lsp} in Introduction, and Theorem~\ref{p:lsp} below) stating that the wall pseudo-metric on $\xz$ is bi-Lipschitz equivalent to the path metric (cf.\ e.g.\ \cite[Section 5.11]{W-qch}). Note that the linear separation property does not hold in general for $B(6)$--complexes --- see Section~\ref{s:final}.
\medskip

Let $p,q$ be two distinct vertices in $X$.
It is clear that
\begin{align*}
  \dw(p,q) \leqslant d(p,q).
\end{align*}
For the rest of this section our aim is to prove an opposite (up to a scaling constant) inequality.
\medskip

Let $\gamma$ be a geodesic in $X$ (that is, in its $1$--skeleton $\xj$) with endpoints $p,q$. Let $A(\gamma)$ denote the set of edges in $\gamma$ whose walls meet $\gamma$ in only one edge (in particular such walls separate $p$ from $q$). Clearly $\dw(p,q)\geqslant |\ag|$.
We thus estimate $\dw(p,q)$ by closely studying the set $\ag$. The estimate is first provided locally, and then we use the local-to-global density principle (Lemma~\ref{l:lgd}) to obtain a global bound.

\subsection{Local estimate on $|\ag|$.}
\label{s:local}
For a local estimate we need to define neighborhoods $N_e$ --- \emph{relator neighborhoods in $\gamma$} --- one for every edge $e$ in $\gamma$, for which the number $|E(N_e)\cap \ag|$ can be bounded from below.

For a given edge $e$ of $\gamma$ we define a corresponding relator neighborhood $N_e$ as follows.
If $e\in \ag$ then $N_e=\{ e \}$. Otherwise, we proceed in the following way.
\medskip

Since $e$ is not in $\ag$, its wall $w$ crosses $\gamma$ in at least one more edge. In the wall $w$, choose an edge $e'\subseteq \gamma$ being closest to $e\neq e'$. The hypergraph $\Gamma_w$ of the wall $w$ is a tree by Lemma~\ref{l:hypergraph}. Consider the geodesic between vertices $e$ and $e'$ in $\Gamma_w$. Let $r$ be the relator corresponding to an edge in $\Gamma_w$ lying on this geodesic and containing $e$. 
Two edges in a wall contained in a single relator (that is, opposite in that relator) do not lie on a geodesic in $\xj$ (by Theorem~\ref{l:carrconv}).
Since $\gamma$ is a geodesic, we have that $e'$ is not in $r$. Thus, let $e''$ be a vertex (edge in $X$) on the geodesic in $\Gamma_w$ contained in $r$ (considered as an edge in $\Gamma_w$). Consequently, let $r'$ be the other relator containing $e''$ and corresponding to an edge in the geodesic in $\Gamma_w$.

We define $N_e$ as the intersection $r\cap \gamma$, that is, as the maximal subpath of $\gamma$ contained in the relator $r$. Observe that the choice of $N_e$ is not unique.
In the rest of this section we estimate the number of edges in $N_e$ belonging to $\ag$.
\medskip

Denote by $p',q'$ the endpoints of $N_e$, such that $p'$ is closer to $p$. We begin with an auxiliary lemma.

\begin{figure}[h!]
\centering
\includegraphics[width=0.9\textwidth]{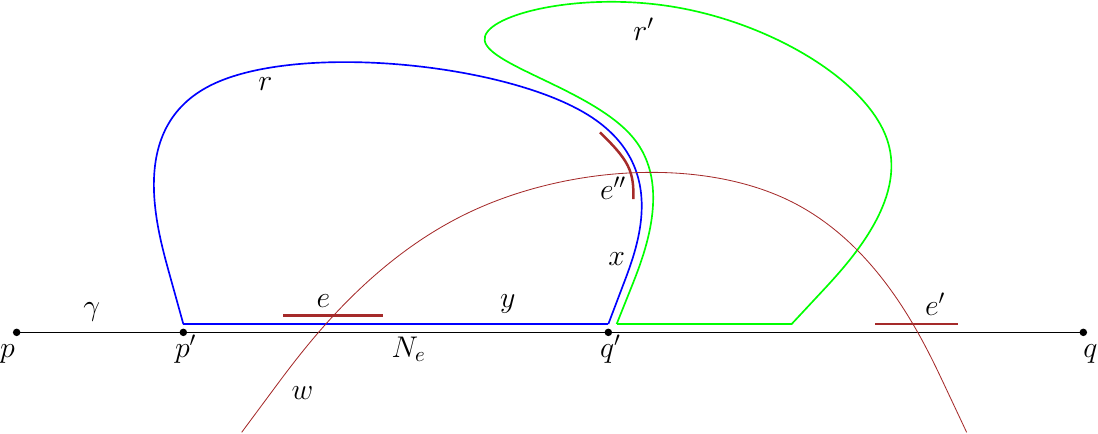}
\caption{The situation in Lemma~\ref{l:prop1}.}
\label{fig_1}
\end{figure}

\begin{lem}
 \label{l:prop1}
  Assume that $q'$ lies (on $\gamma$) between $e$ and $e'$.
  Then we have:
  \begin{align}
    \label{e:l1a}
    d(p',q')>d(e,q')>\left( \frac{1}{2}-\lambda \right) |r|,
  \end{align}
  \begin{align}
    \label{e:l1b}
    d(e,p')<2\lambda d(p',q')-1.
  \end{align}
\end{lem}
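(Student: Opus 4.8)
The plan is to work entirely inside the relator $r$ and on the geodesic $\gamma$, exploiting that $N_e = r\cap\gamma$ is a subpath of the boundary cycle of $r$ as well as a geodesic in $\xj$. First I would set up notation: $N_e$ runs from $p'$ to $q'$ along $\gamma$, and the complementary arc of $\partial r$ from $p'$ to $q'$ (not through $N_e$) has length $|r| - d(p',q')$. Since $N_e$ is a geodesic, $d(p',q') = |N_e| \le |r| - d(p',q')$, hence $d(p',q') \le |r|/2$; this already tells us $N_e$ is "at most half of $r$", which is the geometric content we will repeatedly use.

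Next I would locate $e$ and $e'$ in this picture. By construction $e$ lies in $r$ (it is one of the two opposite edges of $r$ realizing a step of the hypergraph geodesic), while $e'$ — the nearest edge of $w$ on $\gamma$ to $e$ — does \emph{not} lie in $r$, by the convexity of the hypercarrier (Theorem~\ref{l:carrconv}) together with the fact that two opposite edges of a single relator cannot both lie on a geodesic. Under the hypothesis of the lemma, $q'$ lies on $\gamma$ strictly between $e$ and $e'$; since $e' \notin r$ but $e \in r = $ the relator containing $N_e$, the point $q'$ is genuinely an interior-side endpoint separating $e$ from $e'$ along $\gamma$, so $d(e,q') < d(p',q')$, and in fact $e$ lies in the interior of $N_e$ (strictly between $p'$ and $q'$), giving the strict inequality $d(p',q') > d(e,q')$ in \eqref{e:l1a}.

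For the lower bound $d(e,q') > (\tfrac12 - \lambda)|r|$ I would argue as follows. Consider the subpath $\delta$ of $\partial r$ that goes from $e$ (or rather the endpoint of $e$ closer to $q'$) to $q'$ \emph{the long way around}, i.e.\ not through the short arc of $N_e$; its length is roughly $|r| - d(e,q')$ up to the single edge $e$. On the other hand, $\delta$ together with the subsegment of $\gamma$ from $q'$ back toward $e$ forms, after removing backtracking, a path that must be a concatenation controlled by pieces — but more directly: the opposite edge of $e$ in $r$, call it $\bar e$, is the next vertex of the hypergraph geodesic and lies on the arc of $\partial r$ \emph{outside} $N_e$. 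Since $\gamma$ is a geodesic and $e,\bar e$ are opposite in $r$, $\bar e \notin \gamma$; comparing the two arcs of $\partial r$ between $e$ and $\bar e$ (each of length exactly $|r|/2$) with geodesics in $\xj$ and using that any subpath of $\partial r$ lying on $\gamma$ is a piece unless it is too long, I would extract that the portion of $N_e$ on the $q'$-side of $e$ has length at least $|r|/2 - \lambda|r| $, which is exactly $(\tfrac12-\lambda)|r|$; since that portion is contained in the segment from $e$ to $q'$, we get $d(e,q') > (\tfrac12-\lambda)|r|$. Combined with $d(p',q') > d(e,q')$ this finishes \eqref{e:l1a}.

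Finally, for \eqref{e:l1b}, the segment of $N_e$ on the $p'$-side of $e$, which has length $d(e,p')$, is a subpath of $\partial r$ that also lies on the geodesic $\gamma$; any such subpath that occurs in two distinct relators (here $r$ and — via $\bar e$ and the next step of the hypergraph geodesic — the relator $r'$) is a piece, so its length is $< \lambda|r|$. Actually I expect to need the sharper bound: $d(e,p')$ together with one more edge fits inside a piece of $r$, and since $d(p',q') \le |r|/2$ one converts $\lambda|r| < 2\lambda d(p',q') \cdot (\text{something})$; more carefully, from $d(p',q') > (\tfrac12-\lambda)|r|$ — wait, that is a lower bound — one instead uses $|r| < d(p',q')/(\tfrac12-\lambda)$ is false, so the right route is: $d(e,p') + d(e,q') = d(p',q')$, and $d(e,p') < \lambda|r|$ while $d(e,q') > (\tfrac12-\lambda)|r|$ forces $|r| < d(p',q')/(\tfrac12-\lambda)$, whence $d(e,p') < \lambda|r| < \frac{\lambda}{1/2-\lambda} d(p',q')$; for $\lambda \le 1/6$ the constant $\frac{\lambda}{1/2-\lambda} \le 1/2 < 2$, and a careful accounting of the "$\pm 1$" from the edge $e$ itself yields the stated $d(e,p') < 2\lambda d(p',q') - 1$. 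The main obstacle I anticipate is the bookkeeping around the single edge $e$ and the exact identification of which arc of $\partial r$ is a piece — i.e.\ making rigorous the claim that the relevant subpaths of $\partial r$ on $\gamma$ are genuine pieces (occurring in $r$ and in the adjacent relator $r'$ coming from the hypergraph geodesic), so that the $C'(\lambda)$ inequality applies with the correct length; once that is pinned down, the inequalities are elementary.
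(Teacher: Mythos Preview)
Your proposal has a genuine gap in both parts, stemming from not correctly identifying \emph{which} subpath of $\partial r$ is the piece.

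For \eqref{e:l1a}: the assertion that ``any subpath of $\partial r$ lying on $\gamma$ is a piece unless it is too long'' is not a valid principle --- lying on a geodesic does not make a path a piece. The paper's argument is this: with $e''$ the edge opposite $e$ in $r$ (your $\bar e$), set $x=d(e'',q')$ and $y=d(e,q')$, so $y+x+1=|r|/2$. The crucial observation you are missing is that $q'\in r'$. This follows because $q'$ and $e'$ both lie in the hypercarrier of $w$, so by convexity (Theorem~\ref{l:carrconv}) the whole geodesic segment of $\gamma$ from $q'$ to $e'$ lies in the hypercarrier; since this segment leaves $r$ at $q'$, the tree structure of $\Gamma_w$ (Lemma~\ref{l:hypergraph}) forces it to enter $r'$, whence $q'\in r'$. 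Therefore the arc of $\partial r$ from $q'$ to $e''$ (including $e''$), of length $x+1$, lies in $r\cap r'$ and is a piece, giving $x+1<\lambda|r|$ and hence $y=|r|/2-(x+1)>(1/2-\lambda)|r|$.

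For \eqref{e:l1b}: the segment of $N_e$ on the $p'$--side of $e$ is \emph{not} contained in $r'$ --- the piece $r\cap r'$ sits near $e''$, on the $q'$--side, not near $p'$ --- so your claim $d(e,p')<\lambda|r|$ is unjustified. Even granting it, combining with the \emph{lower} bound $d(p',q')>(1/2-\lambda)|r|$ only yields $d(e,p')<\frac{2\lambda}{1-2\lambda}d(p',q')$, which is strictly weaker than $2\lambda d(p',q')-1$ (for $\lambda=1/6$ the constants are $1/2$ versus $1/3$); no ``$\pm 1$ accounting'' bridges that gap. The correct route is pure arithmetic from what is already established: use the \emph{upper} bound $d(p',q')\leqslant|r|/2$ together with $y>(1/2-\lambda)|r|$ to get $y>(1-2\lambda)\,d(p',q')$, and then
\[
d(e,p')=d(p',q')-y-1<d(p',q')-(1-2\lambda)d(p',q')-1=2\lambda\, d(p',q')-1.
\]
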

\begin{proof}
  Let $x=d(e'',q')$ and $y=d(e,q')$ --- see Figure~\ref{fig_1}. By definition of the wall $w$, we have
  \begin{align}
    \label{e:l1-1}
    y+x+1=\frac{|r|}{2}.
  \end{align}
  Since $N_e$ is a geodesic we obtain
  \begin{align}
    \label{e:l1-2}
    d(p',q')\leqslant\frac{|r|}{2}.
  \end{align}

  The relator $r'$ belongs to a hypercarrier of $w$, and $q',e'$ are endpoints of a geodesic lying both in the hypercarrier.
  By the convexity (Theorem~\ref{l:carrconv}) and by the tree-like structure of hypercarriers (Lemma~\ref{l:hypergraph}) we obtain that $q'\in r'$.
  Thus, the path in $r$ joining $q'$ and $e''$, including $e''$, is contained in $r'$. It follows that this path, of length $x+1$, is a piece and hence, by the $C'(\lambda)$--small cancellation condition, we have
  \begin{align}
    \label{e:l1-3}
    x+1<\lambda |r|.
  \end{align}
  Combining (\ref{e:l1-1}) and (\ref{e:l1-3}) we obtain
  \begin{align*}
    d(p',q')>y=\frac{|r|}{2}-(x+1)>\frac{|r|}{2}-\lambda |r|,
  \end{align*}
  that proves (\ref{e:l1a}).
  Combining this with (\ref{e:l1-2}) we obtain
  \begin{align*}
    \frac{y}{d(p',q')}> \frac{|r|/2-\lambda |r|}{|r|/2}=1-2\lambda.
  \end{align*}
  Thus,
  \begin{align*}
    d(e,p')=d(p',q')-y-1<2\lambda d(p',q')-1,
  \end{align*}
  that finishes the proof.
\end{proof}

\begin{lem}[Local density of $\ag$]
  \label{l:prop2}
The number of edges in $N_e$, whose walls separate $p$ from $q$ is estimated as follows:
\begin{align*}
  |E(N_e)\cap \ag|\geqslant \frac{1-6\lambda+4\lambda^2}{1-2\lambda}\cdot|E(N_e)|.
\end{align*}
\end{lem}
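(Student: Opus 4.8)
The plan is to estimate $|E(N_e)\cap\ag|$ by exhibiting a large sub-path of $N_e$ all of whose edges have walls meeting $\gamma$ only once, and then to control the ``bad'' edges (those whose walls re-enter $\gamma$). Set $N_e = r\cap\gamma$ with endpoints $p',q'$, $p'$ closer to $p$, and let $L=|E(N_e)|=d(p',q')$. The first reduction is to dispose of the easy case: if $e\in\ag$ then $N_e=\{e\}$ and the inequality is trivial (the constant $\frac{1-6\lambda+4\lambda^2}{1-2\lambda}=\frac{(1-2\lambda)(1-4\lambda)+\ldots}{1-2\lambda}\le 1$ for $\lambda\le 1/6$, so one just checks it is $\le 1$). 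So assume $e\notin\ag$, which puts us in the geometry set up before the lemma, with the auxiliary edges $e',e''$ and the second relator $r'$. By symmetry of the roles of the two ends of $N_e$ I would first treat the configuration of Lemma~\ref{l:prop1}, where $q'$ lies on $\gamma$ between $e$ and $e'$, and record the companion estimates for the mirror configuration where $p'$ lies between $e$ and $e'$; the remaining case is when $e$ lies between $p'$ and $q'$ in a way that both endpoints escape, which should be handled similarly or shown not to occur.

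The core of the argument is a counting/pigeonhole step on the sub-path. Consider an edge $f$ in $N_e$ whose wall $w_f$ meets $\gamma$ in a second edge $g\ne f$. I want to say that then $f$ must lie close to one of the endpoints $p',q'$ of $N_e$, quantitatively: the sub-path of $N_e$ between $f$ and its ``partner'' in $r$ under $w_f$ is a piece (it lies in $r$ and in the relator across $w_f$), so by $C'(\lambda)$ it has length $<\lambda|r|$, forcing $f$ to be within distance roughly $\lambda|r|$ of the point where $w_f$ leaves $r$; combined with the geodesic convexity of hypercarriers (Theorem~\ref{l:carrconv}), which forbids both of a wall's $r$-opposite edges from lying on $\gamma$, this pins the bad edges into the two end-regions of $N_e$ of length about $\lambda|r|$ each. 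Using $|r|\le \frac{2L}{1-2\lambda}$ from \eqref{e:l1a}–\eqref{e:l1b} of Lemma~\ref{l:prop1} (more precisely $d(p',q')>(\tfrac12-\lambda)|r|$), the total length of the bad region is at most $2\lambda|r|\le \frac{4\lambda L}{1-2\lambda}$, so the number of good edges is at least $L-\frac{4\lambda L}{1-2\lambda}=\frac{1-6\lambda}{1-2\lambda}L$. The extra $+4\lambda^2$ in the numerator should come from being slightly more careful — using $x+1<\lambda|r|$ rather than $x<\lambda|r|$, or sharpening the end-region length via \eqref{e:l1b} which gives $d(e,p')<2\lambda d(p',q')-1$ — yielding $\frac{1-6\lambda+4\lambda^2}{1-2\lambda}$ after bookkeeping.

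The main obstacle I anticipate is the precise definition of ``bad edge'' and the proof that bad edges really are confined to the two end-regions: one must show that if $f\in N_e$ has a wall returning to $\gamma$, then following the hypergraph geodesic (a tree by Lemma~\ref{l:hypergraph}) from $f$ to its nearest $\gamma$-partner, the step out of $r$ happens near an endpoint of $N_e$, and this requires combining the piece bound with convexity exactly as in Lemma~\ref{l:prop1} but now applied to an arbitrary edge $f$ of $N_e$ rather than to the distinguished edge $e$. There is also the subtlety that a single wall could in principle account for two bad edges of $N_e$, and that the end-regions at $p'$ and at $q'$ might overlap when $|r|$ is small relative to $L$ — but $L\le|r|/2$ from \eqref{e:l1-2} prevents pathological overlap, and $\lambda\le 1/6$ keeps $2\lambda|r|<|r|$. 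Once the confinement of bad edges is established, the remaining estimate is the elementary arithmetic sketched above, and one finally notes the constant $\frac{1-6\lambda+4\lambda^2}{1-2\lambda}$ is positive precisely because $\lambda\le 1/6$ (it factors usefully, e.g.\ as $\frac{(1-2\lambda)(1-4\lambda)}{1-2\lambda}+\frac{(\text{small})}{1-2\lambda}$-type manipulation), which is where the $C'(1/6)$ hypothesis enters, and this positivity is exactly what Lemma~\ref{l:lgd} then needs to upgrade the local bound to the global linear separation inequality.
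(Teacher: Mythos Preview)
Your overall strategy is the same as the paper's: confine the ``bad'' edges $f\in E(N_e)\setminus\ag$ to small neighborhoods of the endpoints $p',q'$ of $N_e$ and subtract. You also correctly arrive at the weaker bound $\frac{1-6\lambda}{1-2\lambda}\,|E(N_e)|$ by bounding each end-region by $\frac{2\lambda}{1-2\lambda}L$. However, there is a genuine gap in how you obtain the sharper constant, and this gap matters: at $\lambda=1/6$ the weaker constant vanishes, so the lemma would be vacuous exactly in the case the paper cares about.

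The missing idea is that the $+4\lambda^2$ does \emph{not} come from the $\pm 1$ bookkeeping you suggest; it comes from an \emph{asymmetric} treatment of the two endpoints. In the paper one distinguishes, for each bad edge $f$, two cases: ``Up'' ($r_f=r$), where Lemma~\ref{l:prop1} applied to $f$ gives $d(f,\{p',q'\})<2\lambda L$, and ``Down'' ($r_f\neq r$), where a piece argument gives the looser $d(f,\{p',q'\})<\frac{2\lambda}{1-2\lambda}L$. The crucial extra step (``Down+'') uses the specific relator $r'$ attached to the distinguished edge $e$ to show that on the side of the endpoint lying between $e$ and $e'$, the Down configuration is \emph{impossible}: two subcases are ruled out by exhibiting paths that are concatenations of at most three pieces covering more than half of $|r'|$ or $|r_f|$, contradicting $C'(1/6)$. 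Consequently, bad edges near that endpoint obey the tighter $2\lambda L$ bound, and only the other endpoint needs the looser $\frac{2\lambda}{1-2\lambda}L$ bound. Subtracting $2\lambda L+\frac{2\lambda}{1-2\lambda}L$ from $L$ yields $\frac{1-6\lambda+4\lambda^2}{1-2\lambda}L$. Your sketch collapses Up and Down into a single piece argument and never invokes $r'$, so you cannot recover this asymmetry; without it the lemma fails at $\lambda=1/6$.
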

\begin{proof}
  If $e\in \ag$, then $N_e=\{ e\}$ and the lemma is trivially true. Thus, for the rest of the proof we assume that this is not the case and we use the notations introduced above, that is: $e'',p',q',r,r'$.
  To estimate the number of edges in $N_e$ that belong to $\ag$, that is, $|E(N_e)\cap \ag|$ we explore the set of edges $f$ in $N_e$ not belonging to $\ag$.
  \medskip

  For such an $f$, let $f'\subseteq \gamma$ be a closest edge in the same wall $w_f$ as $f$. Again, there is a relator $r_f$ containing $f$, whose corresponding edge in the hypergraph $\Gamma_{w_f}$ lies on the geodesic between $f$ and $f'$. Let $p''$ and $q''$ denote the endpoints of the subpath $r_f\cap \gamma$, with $p''$ closer to $p$.
  There are two cases for such an $r_f$, that we treat separately.
  \medskip

  \noindent
  \emph{Case ``Up":} In this case, we have $r_f=r$.
  Then, by Lemma~\ref{l:prop1}(\ref{e:l1b}), we have
  \begin{align}
    \label{e:l2-1}
    d(f,q')< 2\lambda d(p',q')-1,
  \end{align}
  or
  \begin{align}
    \label{e:l2-2}
    d(f,p')< 2\lambda d(p',q')-1.
  \end{align}

  \medskip

  \noindent
  \emph{Case ``Down":} In this case, we have that $r_f\neq r$. Without loss of generality we may assume that $q''$ lies (on $\gamma$) between $f$ and $f'$ --- see Figures~\ref{fig_2} \&~\ref{fig_3}.

  \begin{figure}[h!]
\centering
\includegraphics[width=0.9\textwidth]{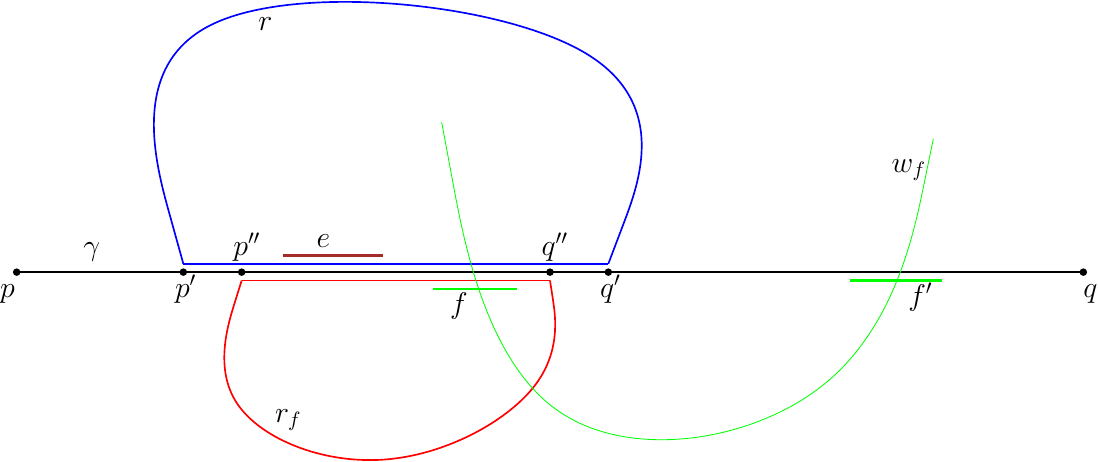}
\caption{The impossible case ``Down".}
\label{fig_2}
\end{figure}

  First, suppose that
  $q''\in N_e$ --- see Figure~\ref{fig_2}.
  Then the subpath of $\gamma$ between $f$ and $q''$, including $f$, is a piece. Thus, by the $C'(\lambda)$--small cancellation condition, we have that $d(f,q'')<\lambda |r_f|$.
  However, by Lemma~\ref{l:prop1}(\ref{e:l1a}) we have that $d(f,q'')>(1/2-\lambda)|r_f|$, leading to a contradiction for $\lambda \leqslant1/4$.

  \begin{figure}[h!]
  \centering
  \includegraphics[width=0.9\textwidth]{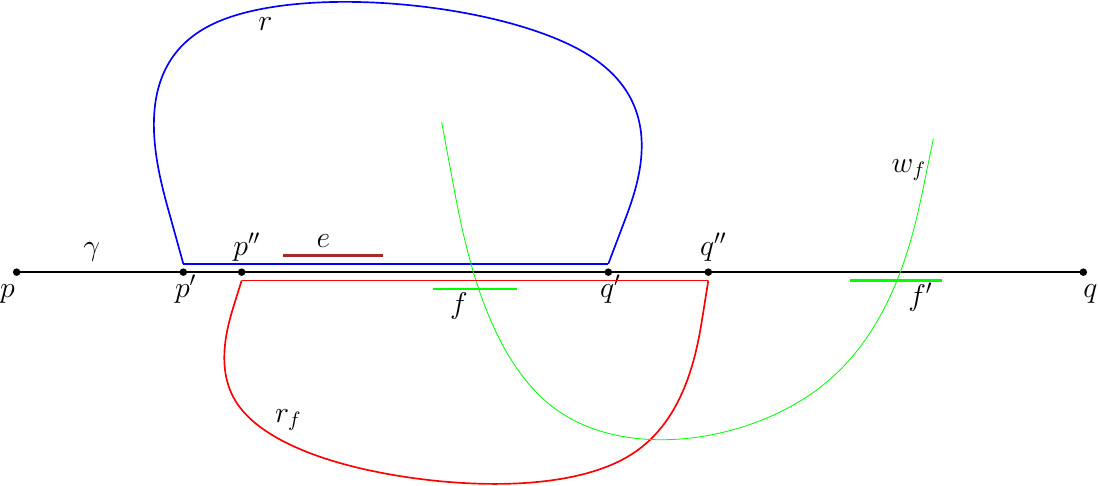}
  \caption{The possible case ``Down".}
  \label{fig_3}
  \end{figure}

  Thus, $q''$ lies (on $\gamma$) between $q'$ and $q$ --- see Figure~\ref{fig_3}.
  It follows that the subpath of $\gamma$ between $f$ and $q'$, including $f$, is a piece. By the $C'(\lambda)$--small cancellation condition we have
  \begin{align*}
     d(f,q')+1 <\lambda |r|.
  \end{align*}
  Thus, by Lemma~\ref{l:prop1}(\ref{e:l1a}), we obtain
    \begin{align}
    \label{e:l2-5}
    d(f,q')+1< \frac{\lambda}{1/2-\lambda}d(p',q')=
    \frac{2\lambda}{1-2\lambda}d(p',q').
  \end{align}
  \medskip

 Finally, combining the two cases (``Up" and ``Down") above, that is, combining (\ref{e:l2-1}), (\ref{e:l2-2}) and (\ref{e:l2-5}), we have that every edge $f\in E(N_e)\setminus \ag$ is contained in the neighborhood of radius
 \begin{align*}
   \frac{2\lambda}{1-2\lambda}\cdot d(p',q')
 \end{align*}
 around the set $\{ p',q'\}$ of endpoints of $N_e$.
 Thus,  we obtain
  \begin{align}
  \label{e:loc1}
  |E(N_e)\cap \ag|\geqslant d(p',q') - 2\cdot\frac{2\lambda}{1-2\lambda}d(p',q')=
  \frac{1-6\lambda}{1-2\lambda}|E(N_e)|.
  \end{align}

\medskip

The formula above is perfectly satisfactory in the case $\lambda < 1/6$. However for $\lambda=1/6$ we need to provide a more precise bound, studying in more details the case ``Down".
  \medskip

  \noindent
  \emph{Case ``Down+":} As in the case ``Down" we have that $r_f\neq r$. Again, we may assume that $q''$ lies (on $\gamma$) between $f$ and $f'$ --- see Figure~\ref{fig_3}.
  Moreover, we consider now only one of the vertices $p',q'$, assuming that $q'$ lies (on $\gamma$) between $e$ and $e'$, as in Lemma~\ref{l:prop1} --- see Figures~\ref{fig_5} \& \ref{fig_6}.
  Let $s$ be furthest from $q'$ vertex in $r'\cap \gamma \setminus r$.
By considerations from the case ``Down" we have that $q''$ lies between $q'$ and $q$. We consider separately two subcases.
\medskip

\noindent
\emph{Subcase 1:} $q''$ lies between $q'$ and $s$ --- see Figure~\ref{fig_5}.
\begin{figure}[h!]
\centering
\includegraphics[width=0.8\textwidth]{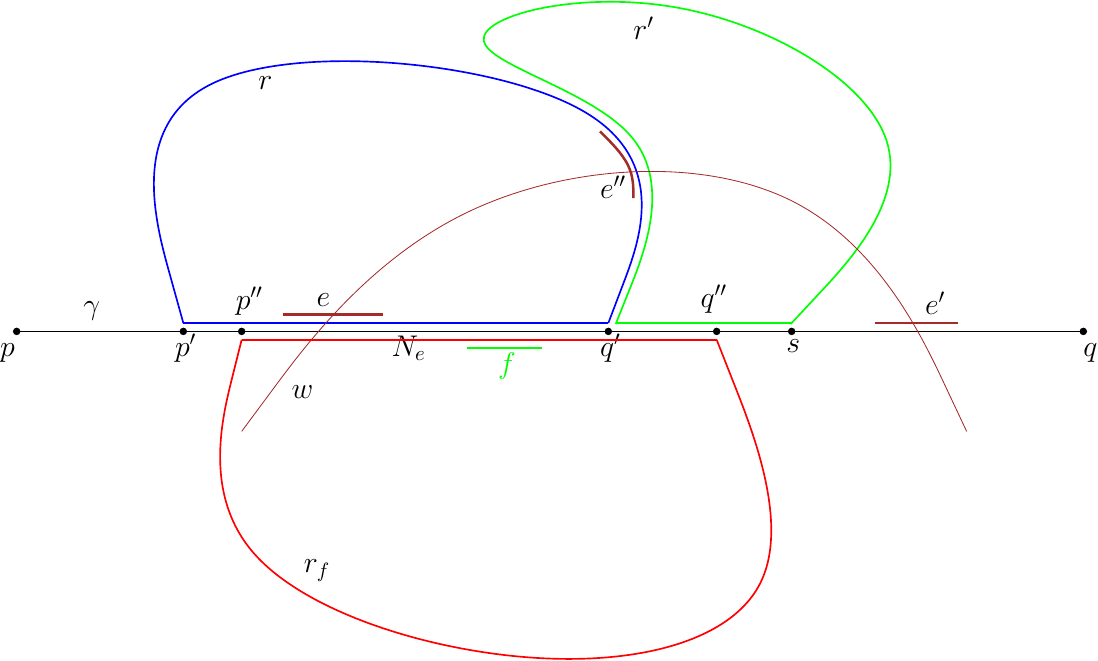}
\caption{Subcase 1 of Case ``Down+".}
\label{fig_5}
\end{figure}
In this case the path between $f$ and $q'$, including $f$, and the path between $q'$ and $q''$ are pieces, so that, by the $C'(\lambda)$--condition we have:
\begin{align*}
  1+d(f,q'')=1+d(f,q')+d(q',q'')< 2 \lambda |r_f|.
\end{align*}

However, by Lemma~\ref{l:prop1}(\ref{e:l1a}) we have
$d(f,q'')>(1/2-\lambda)|r_f|$.  This leads to contradiction for $\lambda \leqslant 1/6$.

\medskip

\noindent
\emph{Subcase 2:} $s$ lies between $q'$ and $q''$ --- see Figure~\ref{fig_6}.
\begin{figure}[h!]
\centering
\includegraphics[width=0.8\textwidth]{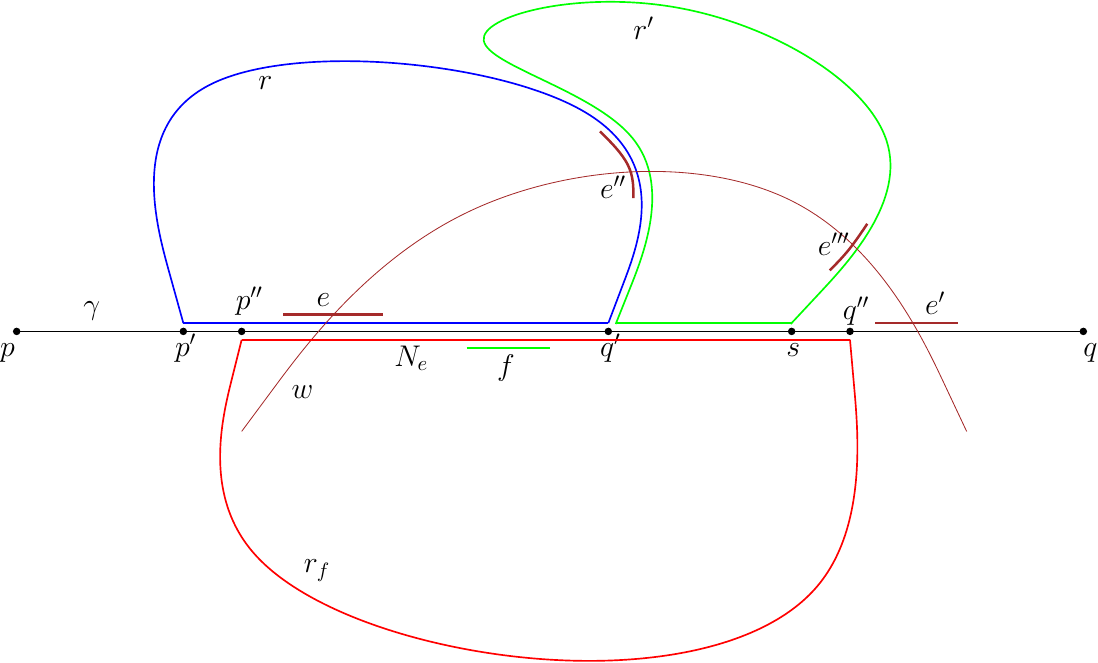}
\caption{Subcase 2 of Case ``Down+".}
\label{fig_6}
\end{figure}
Let $e'''$ be the  vertex in $\Gamma_w$ adjacent to $e''$ and on the geodesic (in $\Gamma_w$) between $e$ and $e'$. Observe that we may have $e'''=e'$ or $e'''\neq e'$, however both cases can be treated at once.
The path in $r \cap r'$ between $e''$ and $q'$, including $e''$, is a piece. Similarly, the path in $r'$ between $e'''$ and $s$, including $e'''$ and omitting $e''$, is a piece.
Since also the path between $q'$ and $s$ is a piece, by the $C'(\lambda)$--condition we have
\begin{align*}
  \frac{|r'|}{2}+1=1+d(e'',q')+d(q',s)+d(s,e''')+1
  <3\lambda |r'|,
\end{align*}
that leads to contradiction for $\lambda \leqslant 1/6$.
\medskip

Combining the two subcases we have that there are no edges like $f$ in the neighborhood of one of points $p'$, $q'$.
\medskip

Now we combine all the cases: ``Up'', ``Down" and ``Down+", i.e.\ the formulas: (\ref{e:l2-1}), (\ref{e:l2-2}), (\ref{e:l2-5}).
We conclude that any edge $f\in E(N_e)\setminus \ag$ is contained in the $[2\lambda d(p',q')]$--neighborhood around one of vertices $p',q'$ or in the $\{[(2\lambda)/(1-2\lambda)]\cdot d(p',q')\}$--neighbor\-hood around the other vertex.
Thus, similarly as in (\ref{e:loc1}), we obtain
\begin{align*}
  |E(N_e)\cap \ag|\geqslant d(p',q') - \frac{2\lambda}{1-2\lambda}d(p',q')- 2\lambda d(p',q')=
  \frac{1-6\lambda+4\lambda ^2}{1-2\lambda}|E(N_e)|.
  \end{align*}

\end{proof}

\subsection{Linear separation property}
\label{s:global}
In this subsection we estimate the overall density of edges with walls separating $p$ and $q$, thus obtaining the linear separation property.
We use the local estimate on the density of $\ag$ (see Lemma~\ref{l:prop2}) and the local-to-global density principle (Lemma~\ref{l:lgd}).

\begin{tw}[Linear separation property]
  \label{p:lsp}
  For any two vertices $p,q$ in $X$ we have
  \begin{align*}
    d(p,q)\geqslant \dw(p,q)\geqslant \frac{1-6\lambda+4\lambda^2}{2-4\lambda}\cdot
    d(p,q),
  \end{align*}
  i.e., the path metric and the wall pseudo-metric are bi-Lipschitz equivalent.
\end{tw}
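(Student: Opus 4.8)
The inequality $d(p,q)\geqslant\dw(p,q)$ was already observed, so the plan is to prove the reverse estimate for two distinct vertices $p,q$. First I would fix a geodesic $\gamma$ in $\xj$ joining $p$ and $q$; since $\gamma$ is a geodesic it is a simple path and its number of edges equals $d(p,q)$. Each edge in $\ag$ lies on a wall meeting $\gamma$ in that single edge, which therefore separates $p$ from $q$, and distinct edges of $\ag$ lie on distinct walls; hence $\dw(p,q)\geqslant|\ag|$, and it suffices to bound $|\ag|$ from below by a fixed multiple of $|E(\gamma)|=d(p,q)$.

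To do this I would feed the local density estimate of Lemma~\ref{l:prop2} into the local-to-global density principle, Lemma~\ref{l:lgd}. Take $\mathcal U=\{\,N_e : e\in E(\gamma)\,\}$ to be the family of relator neighborhoods in $\gamma$ constructed in Subsection~\ref{s:local}, one for each edge $e$ of $\gamma$. Each $N_e$ is a nontrivial subpath of $\gamma$ (it is $\{e\}$ when $e\in\ag$, and otherwise the maximal subpath of $\gamma$ inside the relator $r$, which still contains $e$), and since $e\in N_e$ for every edge $e$ the family covers $\gamma$, so $\bigcup\mathcal U=\gamma$. Let $A$ be the subcomplex of $\gamma$, hence of $\bigcup\mathcal U$, whose edge set is $\ag$. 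Then Lemma~\ref{l:prop2} states precisely that $|E(A)\cap E(U)|/|E(U)|\geqslant C:=\frac{1-6\lambda+4\lambda^2}{1-2\lambda}$ for every $U\in\mathcal U$, so Lemma~\ref{l:lgd} yields $|\ag|=|E(A)|\geqslant\frac{C}{2}\,|E(\bigcup\mathcal U)|=\frac{C}{2}\,|E(\gamma)|$. Since $\frac{C}{2}=\frac{1-6\lambda+4\lambda^2}{2-4\lambda}$, combining with $\dw(p,q)\geqslant|\ag|$ gives the asserted inequality.

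The substantive content has already been established in Lemma~\ref{l:prop2}, so the only points to handle with care in this last step are the bookkeeping ones: verifying the hypotheses of Lemma~\ref{l:lgd} (the geodesic $\gamma$ is a simple path, each $N_e$ is a nontrivial subpath, and the $N_e$ cover $\gamma$), and keeping track of the factor $1/2$ lost in passing from the local to the global bound --- this is exactly what turns the denominator $1-2\lambda$ of the local estimate into the $2-4\lambda$ in the statement. Finally I would note that $\frac{1-6\lambda+4\lambda^2}{2-4\lambda}>0$ for all $\lambda\in(0,\tfrac16]$ (it equals $\tfrac1{12}$ at $\lambda=\tfrac16$), so the two (pseudo-)metrics are indeed bi-Lipschitz equivalent, as claimed.
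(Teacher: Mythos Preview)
Your proof is correct and follows essentially the same route as the paper: fix a geodesic $\gamma$, observe $\dw(p,q)\geqslant|\ag|$, cover $\gamma$ by the relator neighborhoods $N_e$, apply the local density estimate (Lemma~\ref{l:prop2}) on each $N_e$, and then invoke the local-to-global density principle (Lemma~\ref{l:lgd}) to obtain the global bound. Your write-up is slightly more explicit in verifying the hypotheses of Lemma~\ref{l:lgd} and in recording that the constant is positive on $(0,\tfrac16]$, but there is no substantive difference from the paper's argument.
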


\begin{proof}
  The left inequality is clear. Now we prove the right one.
  Let $\gamma$ be a geodesic joining $p$ and $q$.
  The number $|E(\gamma)|$ of edges in $\gamma$ is equal to $d(p,q)$. On the other hand, the number $|\ag|$ of edges in $\gamma$ whose walls meet $\gamma$ in only one edge is at most $\dw(p,q)$. We will thus bound $|\ag|$ from below.

  For any edge $e$ of $\gamma$, let $N_e$ be its relator neighborhood. The collection $\mathcal U=\{N_e\;|\; e\in E(\gamma)\}$ forms a covering family of subpaths of $\gamma$.
  By the local estimate (Lemma~\ref{l:prop2}) we have that
  \begin{align*}
    \frac{|\ag\cap E(N_e)|}{|E(N_e)|}\geqslant \frac{1-6\lambda+4\lambda^2}{1-2\lambda}.
  \end{align*}
  Thus, by the local-to-global density principle (Lemma~\ref{l:lgd}), we have
  \begin{align*}
    |\ag|\geqslant \frac{1}{2}\cdot \frac{1-6\lambda+4\lambda^2}{1-2\lambda}\cdot|E(\gamma)|,
  \end{align*}
  that finishes the proof.
\end{proof}

\begin{rem}
A detailed  description of the geometry of infinitely presented groups satisfying the stronger small cancelation condition $C'(1/8)$
is provided in a recent work of Dru\c{t}u and the first author \cite{ADr}. This yields many analytic and geometric properties of such groups.
In particular, an alternative proof of the bi-Lipschitz equivalence between 
the wall pseudo-metric and the word length metric is given
for such groups. This uses the standard decomposition of the group elements developed in that paper (a powerful technical tool of independent interest).
\end{rem}

\section{Haagerup property}
\label{s:Haa}

A consequence of the linear separation property (Theorem~\ref{p:lsp}) is the following.

\begin{tw}
  \label{t:haag}
  Let $G$ be a group acting properly on a simply connected $C'(1/6)$--complex $X$. Then $G$ acts properly on a space with 
walls. In particular, $G$ has the Haagerup property.
\end{tw}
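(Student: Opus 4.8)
The statement follows by combining the linear separation property with the standard translation between proper actions on spaces with walls and the Haagerup property. First I would observe that the $G$-action on $X$ induces a $G$-action on the space with walls $(X^{(0)},\mathcal W)$ constructed in Section~\ref{s:walls}: since $G$ acts on $X$ by combinatorial automorphisms, it permutes the $2$-cells and hence preserves the relation ``being opposite in a $2$-cell'', so it permutes the walls and acts by automorphisms of $(X^{(0)},\mathcal W)$. The action on $X^{(0)}$ is still proper because $X^{(0)}$ is $G$-cocompact-free to worry about: properness of the action on $X$ means stabilizers of vertices are finite and only finitely many group elements move a given vertex within any bounded set, which is exactly properness of the action on the metric space $(X^{(0)},d)$.

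\textbf{Key steps.} The crucial point is to upgrade properness with respect to the path metric $d$ to properness with respect to the wall pseudo-metric $\dw$. This is precisely what Theorem~\ref{p:lsp} delivers: for all vertices $p,q$ one has
\begin{align*}
  \dw(p,q)\geqslant \frac{1-6\lambda+4\lambda^2}{2-4\lambda}\, d(p,q)
\end{align*}
with $\lambda=1/6$, so the constant $c:=\tfrac{1-6\lambda+4\lambda^2}{2-4\lambda}>0$ is strictly positive and $\dw(p,q)\to\infty$ whenever $d(p,q)\to\infty$. Fixing a basepoint $x_0\in X^{(0)}$, for any $R>0$ the set $\{g\in G: \dw(x_0,gx_0)\leqslant R\}$ is contained in $\{g\in G: d(x_0,gx_0)\leqslant R/c\}$, which is finite by properness of the $G$-action on $X$. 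Hence $G$ acts properly on the space with walls $(X^{(0)},\mathcal W)$. Finally, by the observation of Bo\.zejko--Januszkiewicz--Spatzier (and, independently, Haglund--Paulin--Valette) recalled in the introduction, a finitely generated group admitting a proper action on a space with walls has the Haagerup property: one associates to the wall structure the function $g\mapsto \dw(x_0,gx_0)$, which is a conditionally negative definite function on $G$ that is proper, and properness of a conditionally negative definite function is equivalent to the existence of a proper affine isometric action on a Hilbert space. (Here $G$ is finitely generated since it is a small cancellation group given by a presentation over a finite generating set $S$, or more intrinsically because a group acting properly on a connected complex with finitely many vertex orbits is finitely generated; in our application $X$ is the Cayley complex, so $G$ acts simply transitively on $X^{(0)}$.)

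\textbf{Main obstacle.} There is essentially no obstacle left once Theorem~\ref{p:lsp} is in hand; the content of the theorem is entirely in the linear separation property, and the present statement is a formal consequence. The one point that requires a line of care is verifying that the $G$-action genuinely descends to an action \emph{by wall-automorphisms} on $(X^{(0)},\mathcal W)$ and that it remains proper in the path metric after passing to the $0$-skeleton — both are immediate from the definitions, since walls are defined purely in terms of the $2$-cell structure that $G$ preserves, and $d$ restricted to $X^{(0)}$ is the word metric when $X$ is a Cayley complex. I would also remark in passing that the same argument, applied to the Cayley complex of an infinite $C'(1/6)$-presentation (which is simply connected and on which $G$ acts simply transitively, hence properly), yields Main Theorem as an immediate corollary.
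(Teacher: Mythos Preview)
Your proof is correct and follows essentially the same route as the paper: pass from the proper action on $X$ to an action by automorphisms on the space with walls $(X^{(0)},\mathcal W)$, invoke the linear separation property (Theorem~\ref{p:lsp}) to upgrade properness from $d$ to $d_{\mathcal W}$, and conclude the Haagerup property via the Bo\.zejko--Januszkiewicz--Spatzier / Haglund--Paulin--Valette observation. The parenthetical about finite generation is unnecessary (and not quite justified in the generality of the theorem, since a proper action alone does not force finite generation without cocompactness), but it plays no role in the argument, which stands as written.
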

\begin{proof}
The group $G$ acts properly on the set of vertices $\xz$ of $X$ equipped with the path metric $d(\cdot,\cdot)$.
By Proposition~\ref{p:sww}, this action gives rise to the action by automorphisms on the space with walls $(\xz,\mathcal W)$.
By the linear separation property (Theorem~\ref{p:lsp}), for $\lambda \leqslant 1/6$, we conclude that $G$ acts properly on $(\xz,\mathcal W)$.
By an observation of Bo\. zejko-Januszkiewicz-Spatzier \cite{BoJaS} and Haglund-Paulin-Valette (cf. \cite{ChMV}), the group $G$ has the Haagerup property.
\end{proof}

Observe that Main Theorem follows immediately from the above, since the group $G$ given by the presentation (\ref{e:p1}) acts properly on its Cayley complex $X$, as described in Section~\ref{s:prelim}.

Since infinite groups with the Haagerup property do not satisfy Kazhdan's property~(T), we obtain the following strengthening of \cite[Theorem 14.2]{W-sc} (which was actually proved under weaker
$B(6)$--condition) in the $C'(1/6)$--condition case.

\begin{cor}
  \label{c:noT}
  Let an infinite group $G$ act properly on a simply connected $C'(1/6)$--complex. Then $G$ does not have Kazhdan's property (T).
\end{cor}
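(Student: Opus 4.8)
The plan is to read the statement off from Theorem~\ref{t:haag}. Applied to the given proper action of $G$ on $X$, that theorem produces a proper action of $G$ on the space with walls $(X^{(0)},\mathcal W)$ and, via the observation recalled in its proof, a proper affine isometric action of $G$ on a Hilbert space $\cH$. So the only thing left is to recall why an infinite group cannot have both such a proper action and Kazhdan's property~(T).

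First I would assume, towards a contradiction, that $G$ has property~(T). By the Delorme--Guichardet characterization (in the generality of countable discrete groups, which is our setting), every affine isometric action of $G$ on a Hilbert space has a global fixed point; in particular the action above fixes some $v_0\in\cH$. Then for any $v\in\cH$ the orbit $G\cdot v$ is contained in the sphere about $v_0$ of radius $\|v-v_0\|$, hence has finite diameter, say $\leqslant R$. But properness of the action forces the set $\{\, g\in G : \|g\cdot v-v\|\leqslant R\,\}$ to be finite, and this set is all of $G$; thus $G$ is finite, contradicting the hypothesis. Hence $G$ does not have property~(T).

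There is essentially no obstacle here: the corollary is a formal consequence of Theorem~\ref{t:haag} together with the well-known incompatibility of the Haagerup property and property~(T) for infinite groups. The only points needing (routine) care are the invocation of the Delorme--Guichardet theorem in the appropriate generality and the elementary observation that a proper isometric action with a bounded orbit forces the acting group to be finite. Alternatively one may bypass Hilbert spaces and argue directly inside the wall structure: a group with property~(T) acting on a space with walls has orbits of finite diameter for the wall pseudo-metric $\dw$ (this follows, for instance, from the very construction of the affine Hilbert-space action used above), and then, by the linear separation property (Theorem~\ref{p:lsp}), such orbits are also bounded for the path metric on $X^{(0)}$ --- which, by properness of the action, again forces $G$ to be finite.
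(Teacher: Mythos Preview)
Your proposal is correct and follows essentially the same approach as the paper: the paper derives the corollary in one line from Theorem~\ref{t:haag} together with the well-known fact that an infinite group with the Haagerup property cannot have Kazhdan's property~(T), and you have simply unpacked that fact via the Delorme--Guichardet fixed-point characterization. Note that you only need the direction ``(T) $\Rightarrow$ (FH)'' (Delorme), which holds without any countability hypothesis on $G$, so your parenthetical restriction to countable discrete groups is unnecessary.
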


Another application of the linear separation property concerns orbits of group actions on classical normed spaces. 
If $G$ acts properly on a space with walls $(\xz,\mathcal W)$, then it has a proper affine isometric action 
on the space $L^p$, for every $1\leqslant p <\infty$,  of $p$-summable functions on the family of half-spaces determined by walls $\mathcal W$, cf.~\cite[Proposition 3.1]{CTV} and~\cite[Corollary 1.5]{CDH}.
If $b\colon G\to L^p$ denotes the 1-cocycle of such an action, then $\|b(g)\|_p=\dw(gx_0,x_0)^{1/p}$, where  $x_0\in\xz$ is a base point.
The set $b(G)$ is the orbit of $0\in L^p$. 
Therefore, Theorem~\ref{p:lsp} immediately implies the following.
\begin{cor}
  \label{c:orbit}
  Let an infinite group $G$ act properly on a simply connected $C'(1/6)$--complex. Then $G$ acts properly by affine isometries on the space $L^p$, 
  for every $1\leqslant p <\infty$, with orbit growth bi-Lipschitzly equivalent to $|g|^{1/p}$, where $|g|=d(gx_0, x_0)$.
\end{cor}
In particular, $G$ acts properly by affine isometries on the space $L^1$ with bi-Lipschitzly embedded orbits.
If $G$ is non-amenable, then the above orbit growth in $L^2$ is the best possible. Indeed, by a result of Guentner-Kaminker \cite[Theorem 5.3]{GuKa},
the orbit growth strictly larger than $|g|^{1/2}$ implies the amenability of the group.

\section{Final remarks --- relations to work of D.\ Wise}
\label{s:final}

The main tool used in this paper is the system of walls for a simply connected complex satisfying the $C'(1/6)$--condition, introduced by D.\ Wise in \cite{W-sc}, and then developed further e.g.\ in \cite{W-qch}. In fact, Wise uses this tool usually to treat more general complexes --- complexes satisfying the  $B(6)$--condition.
One might be tempted to claim that the results provided in this paper follow immediately from Wise's work. We show here that this is not the case. Nevertheless, we follow of course many of the ideas presented in \cite{W-sc,W-qch}.
\medskip

First, although many results in \cite{W-sc} concern the general case of $B(6)$--complexes (compare e.g.\ Section~\ref{s:walls} above), eventually some finiteness conditions appear when dealing with proper group actions. For example, in
\cite[Theorem 14.1 and Theorem 14.2]{W-sc} non-satisfiability of the Kazhdan's property (T) is proved under additional assumptions about cocompactness or freeness of the action. Our Corollary~\ref{c:noT} does not require such assumptions. Under our assumptions (stronger than $B(6)$--condition)  --- i.e.\ with the $C'(1/6)$--condition --- we may use the linear separation property (Theorem~\ref{p:lsp}) to omit additional restrictions. However, the linear separation property does not hold for all simply connected $B(6)$--complexes.
Moreover, for such complexes there is, in general, no lower bound on the wall pseudo-metric in terms of the
path metric, as the following example shows.
\medskip

\noindent
{\bf Example 1.}
\begin{figure}[h!]
\centering
\includegraphics[width=0.77\textwidth]{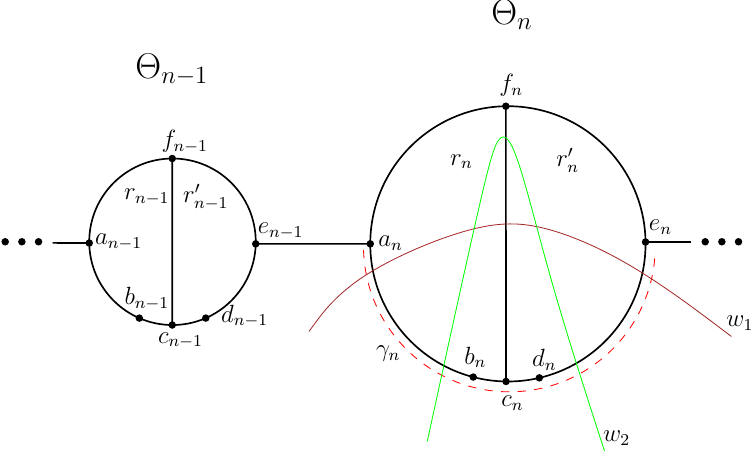}
\caption{Example 1.}
\label{fig_7}
\end{figure}
Let $X^{(1)}$ be constructed using an infinite union of graphs $\Theta_n$, for $n=1,2,3,\ldots$, depicted in Figure~\ref{fig_7}. There is an edge joining $e_n$ with $a_{n+1}$, for every $n$ making $\xj$ connected. For each $\Theta_n$, there are two $2$--cells: $r_n,r_n'$ attached to the shortest simple cycles in $\Theta$, respectively: to the cycle passing through $a_n,b_n,c_n,f_n$, and through $c_n,d_n,e_n,f_n$.
The obtained $2$--complex $X$ is simply connected.
We assign the lengths (in the path metric) of segments in $\Theta_n$ as follows:
\begin{align*}
  d(b_n,c_n)=d(c_n,d_n)=3,\\
  d(c_n,f_n)=2n,\\
  d(a_n,b_n)=d(d_n,e_n)=n,\\
  d(a_n,f_n)=d(f_n,e_n)=n+3.
\end{align*}
It is easy to check that this turns $X$
into a $B(6)$--complex. Now, consider the standard structure of the space with walls $(\xz,\mathcal W)$, as defined in Section~\ref{s:walls}.
The only walls separating $a_n$ from $e_n$ are the walls containing the edges in the segments $b_nc_n$ and $c_nd_n$, each of length $3$ --- see Figure~\ref{fig_7} (two other edges $w_1,w_2$ double intersecting the geodesic $\gamma$ between $a_n$ and $e_n$, thus not separating them are depicted). Hence we obtain $\dw(a_n,e_n)=6$, while $d(a_n,e_n)=2n+6 \rightarrow \infty$, as $n\rightarrow \infty$.
\medskip

We do not know whether a group acting properly on a $B(6)$--complex acts properly on the corresponding space with walls.
\medskip

On the other hand the linear separation property is proved in \cite[Theorem 5.45]{W-qch} for complexes satisfying a condition being some strengthening of the $B(6)$--condition (in the context of a more general small cancellation theory). The proof goes roughly as follows. For a geodesic $\gamma$ and for its edge $e_1$, whose wall does not separate endpoints of $\gamma$ (compare our proof in Section~\ref{s:properness}) ``there is (an edge) $e_2$ within a uniformly distance of $e_1$" whose wall separates the endpoints of $\gamma$. This works clearly only in the case of finitely many types of $2$--cells as the following example shows.

\medskip

\noindent
{\bf Example 2.} Let $X$ be a complex consisting of two $2$--cells $r,r'$, meeting along a (piece) segment $a,q'$.
\begin{figure}[h!]
\centering
\includegraphics[width=0.7\textwidth]{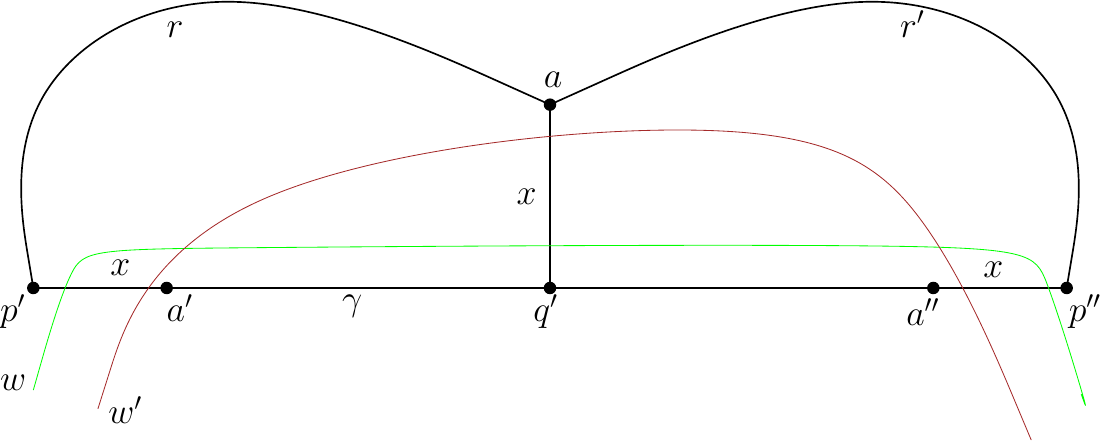}
\caption{Example 2.}
\label{fig_4}
\end{figure}
We set the following lengths (in the path metric) on $X$:
\begin{align*}
  x=d(a,q')=2d(a',p')=2d(a'',p''),\\
  d(q',a')=d(q',a'')=\frac{|r|}{2}-x=\frac{|r'|}{2}-x.
\end{align*}
Making the ratio $x/|r|$ small we can turn $X$ into a $C'(\lambda)$--complex for arbitrarily small $\lambda>0$. On the other hand, all the (standard) walls containing edges in the segment $p'a'$ do not separate $p'$ from $p''$, double crossing the geodesic $\gamma$ between those two points (two such walls $w,w'$ are depicted in Figure~\ref{fig_7}). Thus, with $x$ growing (which can happen if there is infinitely many types of $2$--cells in a complex), for an edge in $p'a'$ its big neighborhood may consist of edges whose walls do not separate $\gamma$.


\begin{bibdiv}
\begin{biblist}

\bib{AW}{article}{
    AUTHOR = {Akemann, Ch. A.}
    AUTHOR = {Walter, M. E.},
     TITLE = {Unbounded negative definite functions},
   JOURNAL = {Canad. J. Math.},
  FJOURNAL = {Canadian Journal of Mathematics. Journal Canadien de
              Math\'ematiques},
    VOLUME = {33},
      YEAR = {1981},
    NUMBER = {4},
     PAGES = {862--871},
      ISSN = {0008-414X},
     CODEN = {CJMAAB},
   MRCLASS = {43A35 (22D10)},
  MRNUMBER = {634144 (83b:43009)},
MRREVIEWER = {Pierre Eymard},
       URL = {http://dx.doi.org/10.4153/CJM-1981-067-9},
}

\bib{AD}{article}{
   author={Arzhantseva, G.},
   author={Delzant, T.},
TITLE = {Examples of random groups},
status =   {preprint},
eprint = {http://www.mat.univie.ac.at/~arjantseva/publicationsGA.html},
 YEAR = {2008}}

\bib{ADr}{article}{
   author={Arzhantseva, G.},
   author={Dru\c{t}u, C.},
TITLE = {Infinitely presented small cancellation groups: Rapid Decay and quasi-homomorphisms},
 status =   {preprint},
 eprint = {arXiv:1212.5280},
 YEAR = {2012}}

\bib{BaumslagMillerShort}{article}{
    AUTHOR = {Baumslag, G.},
     AUTHOR = {Miller, C. F., III},
     AUTHOR = {Short, H.},
     TITLE = {Unsolvable problems about small cancellation and word
              hyperbolic groups},
   JOURNAL = {Bull. London Math. Soc.},
  FJOURNAL = {The Bulletin of the London Mathematical Society},
    VOLUME = {26},
      YEAR = {1994},
    NUMBER = {1},
     PAGES = {97--101},
      ISSN = {0024-6093},
     CODEN = {LMSBBT},
   MRCLASS = {20F10 (20F06)},
  MRNUMBER = {1246477 (94i:20053)},
MRREVIEWER = {Martin Edjvet},
}

\bib{BelegradekOsin}{article}{ 
    AUTHOR = {Belegradek, I.}
    AUTHOR = {Osin, D.},
     TITLE = {Rips construction and {K}azhdan property ({T})},
   JOURNAL = {Groups Geom. Dyn.},
  FJOURNAL = {Groups, Geometry, and Dynamics},
    VOLUME = {2},
      YEAR = {2008},
    NUMBER = {1},
     PAGES = {1--12},
      ISSN = {1661-7207},
   MRCLASS = {20F67 (20F65)},
  MRNUMBER = {2367206 (2009c:20076)},
MRREVIEWER = {Fran{\c{c}}ois Dahmani},
}

\bib{BR}{article}{
AUTHOR = {Bergelson, V.}
AUTHOR = {Rosenblatt, J.},
     TITLE = {Mixing actions of groups},
   JOURNAL = {Illinois J. Math.},
  FJOURNAL = {Illinois Journal of Mathematics},
    VOLUME = {32},
      YEAR = {1988},
    NUMBER = {1},
     PAGES = {65--80},
      ISSN = {0019-2082},
     CODEN = {IJMTAW},
   MRCLASS = {28D15 (22D10)},
MRREVIEWER = {Arlan Ramsay},
       URL = {http://projecteuclid.org/getRecord?id=euclid.ijm/1255989229},
}

\bib{BoJaS}{article}{
   author={Bo{\.z}ejko, M.},
   author={Januszkiewicz, T.},
   author={Spatzier, R. J.},
   title={Infinite Coxeter groups do not have Kazhdan's property},
   journal={J. Operator Theory},
   volume={19},
   date={1988},
   number={1},
   pages={63--67},
   issn={0379-4024},
}

\bib{CDH}{article}{
   author={Chatterji, I.},
   author={Dru{\c{t}}u, C.},
   author={Haglund, F.},
   title={Kazhdan and Haagerup properties from the median viewpoint},
   journal={Adv. Math.},
   volume={225},
   date={2010},
   number={2},
   pages={882--921},
}

\bib{ChMV}{article}{
   author={Cherix, P.-A.},
   author={Martin, F.},
   author={Valette, A.},
   title={Spaces with measured walls, the Haagerup property and property
   (T)},
   journal={Ergodic Theory Dynam. Systems},
   volume={24},
   date={2004},
   number={6},
   pages={1895--1908},
   issn={0143-3857},
}

\bib{ChCJJV}{book}{
    AUTHOR = {Cherix, P.-A.}
    AUTHOR = {Cowling, M.}
    AUTHOR = {Jolissaint, P.}
      AUTHOR = {Julg, P.}
       AUTHOR = {Valette, A.},
     TITLE = {Groups with the {H}aagerup property (Gromov's a-T-menability)},
    SERIES = {Progress in Mathematics},
    VOLUME = {197},
      PUBLISHER = {Birkh\"auser Verlag},
   ADDRESS = {Basel},
      YEAR = {2001},
     PAGES = {viii+126},
      ISBN = {3-7643-6598-6},
   MRCLASS = {22D10 (22-02 22D25 22E30 43A07 46Lxx)},
MRREVIEWER = {Tullio G. Ceccherini-Silberstein},
       URL = {http://dx.doi.org/10.1007/978-3-0348-8237-8},
}

\bib{Cho}{article}{
    AUTHOR = {Choda, M.},
     TITLE = {Group factors of the {H}aagerup type},
   JOURNAL = {Proc. Japan Acad. Ser. A Math. Sci.},
  FJOURNAL = {Japan Academy. Proceedings. Series A. Mathematical Sciences},
    VOLUME = {59},
      YEAR = {1983},
    NUMBER = {5},
     PAGES = {174--177},
      ISSN = {0386-2194},
     CODEN = {PJAADT},
   MRCLASS = {46L35},
MRREVIEWER = {Vaughan Jones},
       URL = {http://projecteuclid.org/getRecord?id=euclid.pja/1195515589},
}

\bib{CTV}{article}{
   author={de Cornulier, Y.},
   author={Tessera, R.},
   author={Valette, A.},
   title={Isometric group actions on Banach spaces and representations
   vanishing at infinity},
   journal={Transform. Groups},
   volume={13},
   date={2008},
   number={1},
   pages={125--147},
}

\bib{Dra}{article}{
    title     ={Open problems in asymptotic dimension theory},
    author    ={Dranishnikov, A.},
    status    ={preprint},
    date      ={2008},
    eprint    ={https://docs.google.com/file/d/0B-tup63120-GUkpiT3Z0VlNtU2c/edit}
}

\bib{DrutuSapir}{article}{
AUTHOR = {Dru{\c{t}}u, C.}
AUTHOR = {Sapir, M.},
     TITLE = {Tree-graded spaces and asymptotic cones of groups},
      NOTE = {With an appendix by Denis Osin and Sapir},
   JOURNAL = {Topology},
  FJOURNAL = {Topology. An International Journal of Mathematics},
    VOLUME = {44},
      YEAR = {2005},
    NUMBER = {5},
     PAGES = {959--1058},
      ISSN = {0040-9383},
     CODEN = {TPLGAF},
   MRCLASS = {20F67 (57M07)},
  MRNUMBER = {2153979 (2006d:20078)},
MRREVIEWER = {Ilya Kapovich},
}

\bib{Dusz}{article}{
    title     ={Generalized small cancellation groups and asymptotic cones},
    author    ={Duszenko, K.},
    status    ={preprint},
    date      ={2010},
    eprint    ={http://ssdnm.mimuw.edu.pl/pliki/prace-studentow/st/pliki/kamil-duszenko-1.pdf}
}

\bib{ErschlerOsin}{article}{
    AUTHOR = {Erschler, A.}, 
     AUTHOR = {Osin, D.},
     TITLE = {Fundamental groups of asymptotic cones},
   JOURNAL = {Topology},
  FJOURNAL = {Topology. An International Journal of Mathematics},
    VOLUME = {44},
      YEAR = {2005},
    NUMBER = {4},
     PAGES = {827--843},
}

\bib{Gro88}{incollection} {
    AUTHOR = {Gromov, M.},
     TITLE = {Rigid transformations groups},
 BOOKTITLE = {G\'eom\'etrie diff\'erentielle ({P}aris, 1986)},
    SERIES = {Travaux en Cours},
    VOLUME = {33},
     PAGES = {65--139},
 PUBLISHER = {Hermann},
   ADDRESS = {Paris},
      YEAR = {1988},
   MRCLASS = {58H15 (22E40 53C10 57R15 58G30)},
MRREVIEWER = {Christopher W. Stark},
}

\bib{Gro93}{incollection} {
    AUTHOR = {Gromov, M.},
     TITLE = {Asymptotic invariants of infinite groups},
 BOOKTITLE = {Geometric group theory, {V}ol.\ 2 ({S}ussex, 1991)},
    SERIES = {London Math. Soc. Lecture Note Ser.},
    VOLUME = {182},
     PAGES = {1--295},
 PUBLISHER = {Cambridge Univ. Press},
   ADDRESS = {Cambridge},
      YEAR = {1993},
   MRCLASS = {20F32 (57M07)},
}

\bib{Gro}{article}{
   author={Gromov, M.},
   title={Random walk in random groups},
   journal={Geom. Funct. Anal.},
   volume={13},
   date={2003},
   number={1},
   pages={73--146},
   issn={1016-443X},
}

\bib{GuKa}{article}{
   author={Guentner, E.},
   author={Kaminker, J.},
   title={Exactness and uniform embeddability of discrete groups},
   journal={J. London Math. Soc. (2)},
   volume={70},
   date={2004},
   number={3},
   pages={703--718},
   issn={0024-6107},
}

\bib{Haa}{article} {
    AUTHOR = {Haagerup, U.},
     TITLE = {An example of a nonnuclear {$C^{\ast} $}-algebra, which has
              the metric approximation property},
   JOURNAL = {Invent. Math.},
  FJOURNAL = {Inventiones Mathematicae},
    VOLUME = {50},
      YEAR = {1978/79},
    NUMBER = {3},
     PAGES = {279--293},
      ISSN = {0020-9910},
     CODEN = {INVMBH},
   MRCLASS = {46L05 (22D35 43A35)},
  MRNUMBER = {520930 (80j:46094)},
MRREVIEWER = {Ole A. Nielsen},
       URL = {http://dx.doi.org/10.1007/BF01410082},
}

\bib{HP}{article}{
   author={Haglund, F.},
   author={Paulin, F.},
   title={Simplicit\'e de groupes d'automorphismes d'espaces \`a courbure
   n\'egative},
   conference={
      title={The Epstein birthday schrift},
   },
   book={
      series={Geom. Topol. Monogr.},
      volume={1},
      publisher={Geom. Topol. Publ., Coventry},
   },
   date={1998},
   pages={181--248},
}

\bib{HLS}{article}{
    AUTHOR = {Higson, N.},
    AUTHOR = {Lafforgue, V.},
    AUTHOR = {Skandalis, G.},
     TITLE = {Counterexamples to the {B}aum-{C}onnes conjecture},
   JOURNAL = {Geom. Funct. Anal.},
  FJOURNAL = {Geometric and Functional Analysis},
    VOLUME = {12},
      YEAR = {2002},
    NUMBER = {2},
     PAGES = {330--354},
}

\bib{HK}{article}{
    AUTHOR = {Higson, N.},
    AUTHOR = {Kasparov, G.},
     TITLE = {Operator {$K$}-theory for groups which act properly and
              isometrically on {H}ilbert space},
   JOURNAL = {Electron. Res. Announc. Amer. Math. Soc.},
  FJOURNAL = {Electronic Research Announcements of the American Mathematical
              Society},
    VOLUME = {3},
      YEAR = {1997},
     PAGES = {131--142},
      ISSN = {1079-6762},
   MRCLASS = {46L80 (19K56)},
  MRNUMBER = {1487204 (99e:46090)},
       URL = {http://dx.doi.org/10.1090/S1079-6762-97-00038-3},
}

\bib{HiKa}{article}{
   author={Higson, N.},
   author={Kasparov, G.},
   title={$E$-theory and $KK$-theory for groups which act properly and
   isometrically on Hilbert space},
   journal={Invent. Math.},
   volume={144},
   date={2001},
   number={1},
   pages={23--74},
   issn={0020-9910},
}

\bib{Lafforgue:hyp}{article}{
    AUTHOR = {Lafforgue, V.},
     TITLE = {La conjecture de {B}aum-{C}onnes \`a coefficients pour les
              groupes hyperboliques},
   JOURNAL = {J. Noncommut. Geom.},
  FJOURNAL = {Journal of Noncommutative Geometry},
    VOLUME = {6},
      YEAR = {2012},
    NUMBER = {1},
     PAGES = {1--197},
      ISSN = {1661-6952},
   MRCLASS = {19K35 (19Lxx 20Fxx 47L80)},
       URL = {http://dx.doi.org/10.4171/JNCG/89},
}

\bib{LS}{book}{
   author={Lyndon, R. C.},
   author={Schupp, P. E.},
   title={Combinatorial group theory},
   series={Classics in Mathematics},
   note={Reprint of the 1977 edition},
   publisher={Springer-Verlag},
   place={Berlin},
   date={2001},
   pages={xiv+339},
   isbn={3-540-41158-5},
}

\bib{MN}{article}{
   author={Meyer, R.},
   author={Nest, R.},
   title={The Baum-Connes conjecture via localisation of categories},
   journal={Topology},
   volume={45},
   date={2006},
   number={2},
   pages={209--259},
}

\bib{MillerSchupp}{article}{
    AUTHOR = {Miller, Ch. F., III},
    AUTHOR = {Schupp, P. E.},
     TITLE = {Embeddings into {H}opfian groups},
   JOURNAL = {J. Algebra},
  FJOURNAL = {Journal of Algebra},
    VOLUME = {17},
      YEAR = {1971},
     PAGES = {171--176},
      ISSN = {0021-8693},
   MRCLASS = {20.10},
  MRNUMBER = {0269728 (42 \#4623)},
MRREVIEWER = {F. B. Cannonito},
}

\bib{MislinValette}{book}{
    AUTHOR = {Mislin, G.},
        AUTHOR = {Valette, A.},
     TITLE = {Proper group actions and the {B}aum-{C}onnes conjecture},
    SERIES = {Advanced Courses in Mathematics. CRM Barcelona},
 PUBLISHER = {Birkh\"auser Verlag},
   ADDRESS = {Basel},
      YEAR = {2003},
     PAGES = {viii+131},
      ISBN = {3-7643-0408-1},
   MRCLASS = {19K35 (46L80 55N20 58J22)},
       URL = {http://dx.doi.org/10.1007/978-3-0348-8089-3},
}

\bib{OllivierWise}{article}{
    AUTHOR = {Ollivier, Y.}
    author={Wise, D. T.},
     TITLE = {Kazhdan groups with infinite outer automorphism group},
   JOURNAL = {Trans. Amer. Math. Soc.},
  FJOURNAL = {Transactions of the American Mathematical Society},
    VOLUME = {359},
      YEAR = {2007},
    NUMBER = {5},
     PAGES = {1959--1976 (electronic)},
      ISSN = {0002-9947},
     CODEN = {TAMTAM},
   MRCLASS = {20F06 (20E22 20F28 20P05)},
  MRNUMBER = {2276608 (2008a:20049)},
MRREVIEWER = {Alain Valette},
}

\bib{Osin}{article}{
    title     ={Questions on relatively hyperbolic groups and related classes},
    author    ={Osin, D.},
    status    ={preprint},
    date      ={2008},
    eprint    ={https://docs.google.com/file/d/0B-tup63120-GM25QeV83SDZnMFU/edit}
}

\bib{Pride}{article}{
    AUTHOR = {Pride, S. J.},
     TITLE = {Some problems in combinatorial group theory},
 BOOKTITLE = {Groups---{K}orea 1988 ({P}usan, 1988)},
    SERIES = {Lecture Notes in Math.},
    VOLUME = {1398},
     PAGES = {146--155},
 PUBLISHER = {Springer},
   ADDRESS = {Berlin},
      YEAR = {1989},
   MRCLASS = {20F05 (20F10)},
  MRNUMBER = {1032822 (90k:20056)},
MRREVIEWER = {Martin Edjvet},
}

\bib{Rips}{article}{
 AUTHOR = {Rips, E.},
     TITLE = {Subgroups of small cancellation groups},
   JOURNAL = {Bull. London Math. Soc.},
  FJOURNAL = {The Bulletin of the London Mathematical Society},
    VOLUME = {14},
      YEAR = {1982},
    NUMBER = {1},
     PAGES = {45--47},
      ISSN = {0024-6093},
     CODEN = {LMSBBT},
   MRCLASS = {20F06 (03D40 20F05)},
  MRNUMBER = {642423 (83c:20049)},
MRREVIEWER = {N. D. Gupta},
}

\bib{Sk}{article}{
   author={Skandalis, G.},
   title={Une notion de nucl\'earit\'e en $K$-th\'eorie (d'apr\`es J.\
   Cuntz)},
   journal={$K$-Theory},
   volume={1},
   date={1988},
   number={6},
   pages={549--573},
}

\bib{TV}{article}{
   author={Thomas, S.},
   author={Velickovic, B.},
   title={Asymptotic cones of finitely generated groups},
   journal={Bull. London Math. Soc.},
   volume={32},
   date={2000},
   number={2},
   pages={203--208},
   issn={0024-6093},
}

\bib{Valette:intro}{book}{
    AUTHOR = {Valette, A.},
     TITLE = {Introduction to the {B}aum-{C}onnes conjecture},
    SERIES = {Lectures in Mathematics ETH Z\"urich},
      NOTE = {From notes taken by Indira Chatterji,
              With an appendix by Guido Mislin},
 PUBLISHER = {Birkh\"auser Verlag},
   ADDRESS = {Basel},
      YEAR = {2002},
}

\bib{W-rf}{article}{
    AUTHOR = {Wise, D. T.},
     TITLE = {A residually finite version of {R}ips's construction},
   JOURNAL = {Bull. London Math. Soc.},
  FJOURNAL = {The Bulletin of the London Mathematical Society},
    VOLUME = {35},
      YEAR = {2003},
    NUMBER = {1},
     PAGES = {23--29},
      ISSN = {0024-6093},
     CODEN = {LMSBBT},
   MRCLASS = {20E26 (20E22 20F06)},
  MRNUMBER = {1934427 (2003g:20047)},
MRREVIEWER = {Dennis Spellman},
}
	
\bib{W-sc}{article}{
    AUTHOR = {Wise, D. T.},
   title={Cubulating small cancellation groups},
   journal={Geom. Funct. Anal.},
   volume={14},
   date={2004},
   number={1},
   pages={150--214},
   issn={1016-443X},
}

\bib{W-qch}{article}{
    title     ={The structure of groups with quasiconvex hierarchy},
    author    ={Wise, D. T.},
    status={preprint},
    eprint    ={https://docs.google.com/open?id=0B45cNx80t5-2T0twUDFxVXRnQnc},
    date={2011}
}

\bib{Yu}{article}{
AUTHOR = {Yu, G.},
     TITLE = {The coarse {B}aum-{C}onnes conjecture for spaces which admit a
              uniform embedding into {H}ilbert space},
   JOURNAL = {Invent. Math.},
  FJOURNAL = {Inventiones Mathematicae},
    VOLUME = {139},
      YEAR = {2000},
    NUMBER = {1},
     PAGES = {201--240},
      ISSN = {0020-9910},
     CODEN = {INVMBH},
   MRCLASS = {19K56 (46L80 57R67 58J22)},
       URL = {http://dx.doi.org/10.1007/s002229900032},
}

\end{biblist}
\end{bibdiv}

\end{document}